\title{On the uniform Rasmussen-Tamagawa \\ conjecture in the CM case}
\author{Davide Lombardo  \thanks{\texttt{davide.lombardo@math.u-psud.fr}} \\ \small{Département de Mathématiques d'Orsay}
}
\newtheorem{theorem}{Theorem}
\newtheorem{conjecture}[theorem]{Conjecture}
\newtheorem{corollary}[theorem]{Corollary}
\newtheorem{definition}[theorem]{Definition}
\newtheorem{lemma}[theorem]{Lemma}
\newtheorem{proposition}[theorem]{Proposition}
\theoremstyle{definition}
\newtheorem{remark}[theorem]{Remark}
\numberwithin{theorem}{section}
\newcommand{\abGal}[1] {\operatorname{Gal}\big(\overline{#1}/#1\big)}
\newcommand{\RTEndo}{\operatorname{RT}^{\operatorname{CM, 1}}}
\newcommand{\RTAll}{\operatorname{RT}^{\operatorname{CM, 2}}}
\newcommand{\CAll}{C^{(2)}}
\newcommand{\CEndo}{C^{(1)}}
\date{}
\begin{document}

\maketitle
\begin{abstract}
We prove a uniform version of a finiteness conjecture due to Rasmussen and Tamagawa in the case of CM abelian varieties. This extends the result of \cite{zbMATH06443603} from elliptic curves to abelian varieties of arbitrary dimension.
\end{abstract}

\section{Introduction}
Motivated by previous work of Anderson and Ihara \cite{MR960948}, in \cite{MR2470396} and \cite{2013arXiv1302.1477R} Rasmussen and Tamagawa have formulated (and partially proven) a series of finiteness conjectures for abelian varieties $A$ over number fields $K$ such that the extension $K(A[\ell^\infty])/K(\mu_{\ell^\infty})$ is both pro-$\ell$ and unramified away from $\ell$. The strongest form of their conjecture, as stated in \cite[Conj.~2]{2013arXiv1302.1477R}, is the following uniform finiteness statement:
\begin{conjecture}\label{conj_UniformRT}
Let 
\[
\operatorname{RT}\left(K,g,\ell \right) = \left\{A\text{ abelian variety over }K \bigm\vert \begin{matrix} \dim A= g \\ K(A[\ell^\infty])/K(\mu_{\ell^\infty}) \text{ is pro-}\ell\text{ and} \\ \text{unramified outside }\ell \end{matrix} \right\}.
\]
There is a function $B(n,g)$ such that, for every number field $K$ of degree $n$ and every prime $\ell>B(n,g)$, the set $\operatorname{RT}\left(K,g,\ell \right)$ is empty.
\end{conjecture}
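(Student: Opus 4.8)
\emph{Reduction to CM.} The plan is to split the proof of Conjecture~\ref{conj_UniformRT} into a reduction to abelian varieties with complex multiplication and the treatment of that case via class field theory. Fix $A \in \operatorname{RT}(K,g,\ell)$ and let $\overline{\rho}_{A,\ell}\colon \operatorname{Gal}(\overline{K}/K) \to \operatorname{GL}_{2g}(\mathbb{F}_{\ell})$ be the representation on $A[\ell]$. By the criterion of N\'eron--Ogg--Shafarevich, the hypothesis that $K(A[\ell^{\infty}])/K(\mu_{\ell^{\infty}})$ be unramified outside $\ell$ means $A$ has good reduction outside $\ell$ over $K(\mu_{\ell^{\infty}})$; since $K(\mu_{\ell^{\infty}})/K$ is unramified at every finite place not above $\ell$, and an abelian variety cannot acquire good reduction along an extension unramified at the bad place, $A$ already has good reduction outside $\ell$ over $K$. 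The pro-$\ell$ hypothesis forces $\overline{\rho}_{A,\ell}\bigl(\operatorname{Gal}(\overline{K}/K(\mu_{\ell^{\infty}}))\bigr)$ to be an $\ell$-subgroup of $\operatorname{GL}_{2g}(\mathbb{F}_{\ell})$, hence unipotent, with abelian quotient inside $\overline{\rho}_{A,\ell}\bigl(\operatorname{Gal}(\overline{K}/K)\bigr)$; so the latter group is solvable. One then wants a bound $B_{0}(n,g)$ such that $\ell > B_{0}(n,g)$ forces every such $A$ to have complex multiplication. For $g=1$ this is available --- it underpins the uniform result of \cite{zbMATH06443603}, which over $\mathbb{Q}$ rests on \cite{MR2470396} --- but in general this ``CM part'' of the conjecture is, I expect, the main obstacle: it cannot yet be established for $g > 1$, and it is why the present paper proceeds under the standing hypothesis that $A$ has CM.

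\emph{Reduction to a Hecke character.} Assume from now on that $A$ has complex multiplication. After a base change $K'/K$ of degree bounded by a constant $c(g)$ --- adjoin the prime-to-$\ell$ torsion of two small auxiliary levels so that $A$ becomes everywhere semistable, hence (being of potential good reduction everywhere, as is every CM abelian variety) of everywhere good reduction, then adjoin all endomorphisms of $A$ together with the reflex field --- we may assume that $A$ has good reduction at every finite place of $K'$ and has CM by the maximal order $O_{E}$ of a CM field $E$ of degree $2g$, with CM type $\Phi$, and that $K'$ contains the reflex field of $(E,\Phi)$; moreover $A$ still lies in $\operatorname{RT}(K',g,\ell)$ and $[K':\mathbb{Q}] \le c(g)\,n$. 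By the main theorem of complex multiplication, $A$ is attached over $K'$ to an algebraic Hecke character $\psi\colon \mathbb{A}_{K'}^{\times}/(K')^{\times} \to E^{\times}$ of trivial conductor, whose infinity type is the reflex norm of $(E,\Phi)$.

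\emph{Translation of the conditions.} That $K'(A[\ell^{\infty}])/K'(\mu_{\ell^{\infty}})$ be pro-$\ell$ says precisely that the reduction of the $\ell$-adic avatar $\psi_{\ell}$ modulo the primes of $O_{E}$ over $\ell$ is trivial on $\operatorname{Gal}(\overline{K'}/K'(\mu_{\ell^{\infty}}))$; since that reduction has image of order prime to $\ell$, it factors through $\operatorname{Gal}(K'(\mu_{\ell})/K')$, a cyclic group of order dividing $\ell-1$. Combined with the fact that the Hodge--Tate weights of $\psi_{\ell}$ are $0$ and $1$ and with Raynaud's bounds for the fundamental characters of finite flat group schemes, the theory of complex multiplication then yields, for every prime $\mathfrak{q}$ of $K'$ with $\mathfrak{q} \nmid \ell$ and every prime $\lambda \mid \ell$ of $O_{E}$, a congruence
\[
\psi(\mathfrak{q}) \ \equiv\ \zeta_{\mathfrak{q},\lambda}\,(\operatorname{N}\mathfrak{q})^{\,n_{\mathfrak{q},\lambda}} \pmod{\lambda}, \qquad \zeta_{\mathfrak{q},\lambda} \in \mu(E), \quad 0 \le n_{\mathfrak{q},\lambda} \le e(\ell \mid K').
\]

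\emph{Conclusion.} Take $\mathfrak{q}$ above the least rational prime $p \ne \ell$, so that $q := \operatorname{N}\mathfrak{q} \le 3^{[K':\mathbb{Q}]}$ and $A$ has good reduction at $\mathfrak{q}$. Then $\psi(\mathfrak{q}) \in O_{E}$ --- the Frobenius endomorphism of the reduction of $A$ at $\mathfrak{q}$, viewed in $E$ through the complex multiplication --- is a $q$-Weil number of weight one: $\lvert \sigma(\psi(\mathfrak{q})) \rvert = \sqrt{q}$ for every embedding $\sigma\colon E \hookrightarrow \mathbb{C}$. Hence $\beta := \psi(\mathfrak{q}) - \zeta_{\mathfrak{q},\lambda}\,(\operatorname{N}\mathfrak{q})^{\,n_{\mathfrak{q},\lambda}}$ is a nonzero element of $O_{E}$ --- otherwise its archimedean absolute values would all be $\sqrt{q}$, whereas those of $\zeta_{\mathfrak{q},\lambda}\,q^{\,n_{\mathfrak{q},\lambda}}$ equal $q^{\,n_{\mathfrak{q},\lambda}}$ --- and $\lambda \mid \beta$, so
\[
\ell \ \le\ \operatorname{N}_{E/\mathbb{Q}}(\lambda) \ \le\ \bigl\lvert \operatorname{N}_{E/\mathbb{Q}}(\beta) \bigr\rvert \ =\ \prod_{\sigma\colon E \hookrightarrow \mathbb{C}} \bigl\lvert \sigma(\psi(\mathfrak{q})) - \sigma(\zeta_{\mathfrak{q},\lambda})\, q^{\,n_{\mathfrak{q},\lambda}} \bigr\rvert \ \le\ \bigl( \sqrt{q} + q^{\,e(\ell \mid K')} \bigr)^{2g}.
\]
As $q$ and $e(\ell \mid K')$ are bounded in terms of $[K':\mathbb{Q}] \le c(g)\,n$, this bounds $\ell$ by an explicit function $B(n,g)$, proving the conjecture for CM abelian varieties. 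This last argument is unconditional, and is the content of the present paper; the only genuinely hard ingredient of the full proof is the reduction carried out in the first step.
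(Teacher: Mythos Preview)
First, note that the displayed statement is a \emph{conjecture}; the paper does not prove it in full, only the CM case (Theorem~\ref{thm_RT}). You correctly flag in your first paragraph that the reduction from the general conjecture to the CM case is the real obstacle and is not available for $g>1$.

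Your treatment of the CM case, however, is \emph{not} the argument of the present paper, contrary to your closing sentence. The paper proceeds along an entirely different route: after reducing to absolutely simple $A$ with $\operatorname{End}_K(A)=\operatorname{End}_{\overline K}(A)$, it uses lower bounds on $[K(A[\ell]):K]$ from \cite{2015arXiv150604734L} to show (Proposition~\ref{prop_Key}) that either $\ell$ is bounded explicitly in terms of $n,g$, or $\ell\mid\operatorname{disc}(E)$. The second alternative is then disposed of via Tsimerman's theorem (Theorem~\ref{thm_Tsimerman}), which bounds $|\operatorname{disc}(E)|$ in terms of the degree of the field of moduli. No Hecke characters, Raynaud bounds, or Weil-number estimates appear.

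Your approach --- constrain the exponent in $\psi(\mathfrak q)\equiv \zeta\,(\mathrm N\mathfrak q)^{n}\pmod\lambda$ via Raynaud, then bound $\ell$ through the archimedean size of the Weil number $\psi(\mathfrak q)$ --- is much closer in spirit to Rasmussen--Tamagawa's own methods. Observe that, as written, your bound on $\ell$ would be \emph{effective}, whereas the paper explicitly remarks in \S\ref{sect_6} that its result is non-effective, the ineffectivity tracing back to Brauer--Siegel through Tsimerman. This discrepancy should make you suspicious of your sketch. The step that deserves scrutiny is the assertion $0\le n_{\mathfrak q,\lambda}\le e(\ell\mid K')$. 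You have shown that $\chi_\lambda$ factors through $\operatorname{Gal}(K'(\mu_\ell)/K')$, hence $\chi_\lambda=\omega^{a_\lambda}$ for some $a_\lambda$ defined modulo $[K'(\mu_\ell):K']$; Raynaud's theorem constrains $\chi_\lambda|_{I_{\ell'}}$ for each $\ell'\mid\ell$ in $K'$, but that only pins down $a_\lambda$ modulo the order of $\omega|_{I_{\ell'}}$, which can be a proper divisor of $[K'(\mu_\ell):K']$. It is not clear that these local constraints force $a_\lambda$ itself to lie in $\{0,\dots,e(\ell\mid K')\}$ rather than merely to be congruent to such a value. (The factor $\zeta\in\mu(E)$ is also unexplained: since $\chi_\lambda=\omega^{a_\lambda}$ on the nose, no root of unity from $E$ enters the congruence at $\mathrm{Frob}_{\mathfrak q}$.) You should either justify this step in full, or explain why your argument --- if correct --- does not render the paper's appeal to Tsimerman's theorem superfluous.
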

Much progress has been made on this conjecture -- in particular, Rasmussen and Tamagawa themselves have proven \cite{2013arXiv1302.1477R} that the Generalized Riemann Hypothesis implies conjecture \ref{conj_UniformRT} for $n$ odd -- but an unconditional proof is only known for $g=1$ and $[K:\mathbb{Q}]$ equal to either 1 or 3. More recently, Bourdon \cite{zbMATH06443603} has given an unconditional proof of a similar finiteness result for CM elliptic curves over arbitrary number fields:
\begin{theorem}{(Bourdon \cite{zbMATH06443603})}\label{thm_Abbey}
Let $K$ be a number field with $[K:\mathbb{Q}]=n$. There is a constant $C=C(n)$ depending only on $n$ with the following property: if
there exists a CM elliptic curve $E/K$ with $K(E[\ell^\infty])$ a pro-$\ell$ extension of
$K(\mu_\ell)$ for some rational prime $\ell$, then $\ell \leq C$.
\end{theorem}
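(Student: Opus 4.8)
The plan is to reduce, at the cost of replacing $n$ by a bounded multiple of itself, to the case where the complex multiplication is defined over $K$ and $\ell$ is unramified in the CM field, and then to contradict the pro-$\ell$ hypothesis using the Hodge--Tate weights of the CM Galois character attached to $E$. First I would replace $K$ by $KF$, where $F=\operatorname{End}(E_{\overline{K}})\otimes\mathbb{Q}$ is the (imaginary quadratic) CM field: this at most doubles $[K:\mathbb{Q}]$ and preserves the hypothesis, since $\operatorname{Gal}\big(KF(E[\ell^\infty])/KF(\mu_\ell)\big)$ is isomorphic to a subgroup of $\operatorname{Gal}\big(K(E[\ell^\infty])/K(\mu_\ell)\big)$. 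So assume $F\subseteq K$ and all endomorphisms of $E$ defined over $K$, and let $\mathcal{O}=\operatorname{End}(E)$, an order of conductor $f$ in $F$. Since $j(E)\in K$ we have $h(\mathcal{O})=[\mathbb{Q}(j(E)):\mathbb{Q}]\le[K:\mathbb{Q}]$, and the classical lower bounds for ring class numbers --- ultimately Siegel's ineffective estimate $h(\mathcal{O}_F)\gg_\varepsilon|\operatorname{disc}F|^{1/2-\varepsilon}$ --- then bound $f$ and $|\operatorname{disc}F|$ in terms of $n$. Enlarging the eventual constant, I may therefore assume $\ell>5$ is unramified in $F$ and prime to $f$, so that $T_\mathfrak{l}E$ is free of rank one over $\mathcal{O}_F\otimes\mathbb{Z}_\ell$ for every prime $\mathfrak{l}\mid\ell$ of $F$ and $G_K$ commutes with the $\mathcal{O}_F$-action; in particular the image of $G_K$ in $\operatorname{GL}(E[\ell])\cong\operatorname{GL}_2(\mathbb{F}_\ell)$ lies in the Cartan subgroup $C$ associated to $\mathcal{O}_F/\ell\mathcal{O}_F$ (split if $\ell$ splits in $F$, non-split otherwise).

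Next I would use the pro-$\ell$ hypothesis to collapse the $\ell$-division field. The order of $C$ is $(\ell-1)^2$ or $\ell^2-1$, hence prime to $\ell$, while $\mu_\ell\subseteq K(E[\ell])$ by the Weil pairing. Thus $\operatorname{Gal}\big(K(E[\ell])/K(\mu_\ell)\big)$ is at once a subgroup of $\operatorname{Gal}(K(E[\ell])/K)\hookrightarrow C$, of order prime to $\ell$, and a quotient of the pro-$\ell$ group $\operatorname{Gal}\big(K(E[\ell^\infty])/K(\mu_\ell)\big)$; it is therefore trivial, i.e. $K(E[\ell])=K(\mu_\ell)$. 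For a prime $\mathfrak{l}\mid\ell$ of $F$ this forces the character $\overline{\psi}_{\mathfrak{l}}\colon G_K\to(\mathcal{O}_F/\mathfrak{l})^\times$ describing the action on $E[\mathfrak{l}]$ to factor through the cyclic group $\operatorname{Gal}(K(\mu_\ell)/K)$, on which the mod-$\ell$ cyclotomic character $\overline{\chi}_\ell$ is injective; hence $\overline{\psi}_{\mathfrak{l}}=\overline{\chi}_\ell^{\,a}$ for some integer $a$ with $0\le a<\ell-1$. (If $\ell$ is inert in $F$ the image has order dividing $\ell-1$, so it lands in $\mathbb{F}_\ell^\times\subseteq\mathbb{F}_{\ell^2}^\times$ and the same conclusion holds.)

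The crucial step is then to contradict the relation $\overline{\psi}_{\mathfrak{l}}=\overline{\chi}_\ell^{\,a}$. Let $\psi_{\mathfrak{l}}$ be the $\mathfrak{l}$-adic character giving the action of $G_K$ on $T_\mathfrak{l}E$, so $\psi_{\mathfrak{l}}\bmod\mathfrak{l}=\overline{\psi}_{\mathfrak{l}}$. Since $E$, being of CM type, has potentially good reduction at every place, $\psi_{\mathfrak{l}}$ is potentially crystalline at every prime above $\ell$, and its Hodge--Tate weights, indexed by the embeddings $\tau\colon K\hookrightarrow\overline{\mathbb{Q}}_\ell$, lie in $\{0,1\}$: the weight at $\tau$ is $1$ exactly when $\tau$ belongs to the CM type $\Phi$ cut out by the action of $\mathcal{O}_F$ on $\operatorname{Lie}(E)$, and $|\Phi|=[K:F]=n/2$, so $0<|\Phi|<n$ because $n\ge2$. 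Now fix a prime $\mathfrak{L}\mid\ell$ of $K$. For $\ell$ larger than an explicit function of $n$ --- in particular $\ell>e(\mathfrak{L}/\ell)+1$, which is harmless since $e(\mathfrak{L}/\ell)\le n$, and large enough to make $\psi_{\mathfrak{l}}$ crystalline at $\mathfrak{L}$ after a base change of degree bounded in terms of $n$ --- the restriction $\overline{\psi}_{\mathfrak{l}}|_{I_{\mathfrak{L}}}$ is given by the standard recipe for the reduction of a crystalline character: it is the product of the fundamental characters of level $f(\mathfrak{L}/\ell)$, raised to the Hodge--Tate weights of $\psi_{\mathfrak{l}}$ at the embeddings above $\mathfrak{L}$; whereas $\overline{\chi}_\ell|_{I_{\mathfrak{L}}}$ is the product of those same fundamental characters each to the first power. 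Comparing the two sides of $\overline{\psi}_{\mathfrak{l}}=\overline{\chi}_\ell^{\,a}$ on $I_{\mathfrak{L}}$ amounts to an equality of exponents modulo $\ell^{f(\mathfrak{L}/\ell)}-1$ between a sum of $\{0,1\}$-valued base-$\ell$ digits and $a\cdot(1+\ell+\dots+\ell^{f(\mathfrak{L}/\ell)-1})$; since $0\le a<\ell-1$, unique base-$\ell$ expansion forces every Hodge--Tate weight of $\psi_{\mathfrak{l}}$ above $\mathfrak{L}$ to equal $a$, and hence $a\in\{0,1\}$. Letting $\mathfrak{L}$ range over all primes above $\ell$ then yields $\Phi=\varnothing$ (if $a=0$) or $\Phi=\operatorname{Hom}(K,\overline{\mathbb{Q}})$ (if $a=1$), contradicting $0<|\Phi|<n$. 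So no such $E$ can exist once $\ell$ exceeds the resulting constant $C(n)$.

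I expect the main obstacle to be the last step: controlling the mod-$\ell$ reduction of the CM Galois character by its Hodge--Tate weights. This is immediate when $\ell$ is unramified in $K$ and $E$ has good reduction above $\ell$ (Fontaine--Laffaille), but to make the bound uniform over all number fields $K$ of degree $n$ one must also deal with primes $\ell$ ramified in $K$ and with primes of bad --- though potentially good --- reduction above $\ell$, which calls for the tamely ramified form of the recipe together with an auxiliary base change of degree bounded in terms of $n$. The remaining, unavoidably ineffective, ingredient is the use of Siegel's class number bound in the first step, which is what prevents $C(n)$ from being made explicit by this argument.
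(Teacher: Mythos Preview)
The paper does not give a standalone proof of this statement --- it is quoted from Bourdon --- but the $g=1$ case of the proof of the main result (Theorem~\ref{thm_RT}) does yield one, and that is what your argument should be compared against.

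Your proof is correct, and up through the deduction $K(E[\ell])=K(\mu_\ell)$ it runs parallel to the paper's: reduce to endomorphisms defined over $K$, use the class-number bound to confine the CM field $F$ to a finite list (so that $\ell$ may be assumed unramified in $F$ and prime to the conductor), and then observe that the Galois image in $(\mathcal{O}_F\otimes\mathbb{F}_\ell)^\times$ has order prime to $\ell$, forcing $K(E[\ell])=K(\mu_\ell)$. Where you diverge is in extracting the final contradiction. The paper (Proposition~\ref{prop_Key}) argues by a direct size estimate: the Mumford--Tate group of a CM elliptic curve has rank $r=2$, and the index bound of \cite{2015arXiv150604734L} gives $[K(E[\ell]):K]\geq c_n^{-1}(\ell-1)^2$ for a constant $c_n$ depending only on $n$, which is incompatible with $[K(E[\ell]):K]=[K(\mu_\ell):K]\leq\ell-1$ once $\ell$ is large. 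Your route through Hodge--Tate weights and the Fontaine--Laffaille description of $\overline{\psi}_{\mathfrak{l}}|_{I_{\mathfrak{L}}}$ reaches the same endpoint, but at the price of the technical issues you yourself flag (ramification of $\ell$ in $K$, places of bad reduction above $\ell$, the inert case); the paper's counting argument sidesteps all of these and, more to the point, generalizes uniformly to CM abelian varieties of any dimension, which is the paper's actual objective.

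One correction: Siegel's bound is not ``unavoidably ineffective'' in this setting. For imaginary quadratic fields the determination of all fields of bounded class number is an effectively solvable problem, and the paper notes (Remark~\ref{rmk_EC} and \S\ref{sect_6}) that this is exactly what makes Bourdon's original theorem effective, in contrast to the higher-dimensional generalization.
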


The purpose of this note is to extend theorem \ref{thm_Abbey} to CM abelian varieties of arbitrary dimension. To be more precise, by an \textbf{abelian variety of CM type} over $K$ we mean an abelian variety $A/K$ such that $\operatorname{End}_{\overline{K}}(A) \otimes \mathbb{Q}$ contains an étale $\mathbb{Q}$-algebra of dimension equal to $2\dim A$. 
We shall show the following higher-dimensional analogue of theorem \ref{thm_Abbey}:
\begin{theorem}\label{thm_RT}
Let
\[
\operatorname{RT}^{\operatorname{CM}}\left(K,g,\ell \right) = \left\{A\text{ abelian variety over }K \bigm\vert \begin{matrix} \dim A= g \\ A\text{ of CM type}\\ K(A[\ell^\infty])/K(\mu_\ell) \text{ is pro-}\ell \end{matrix} \right\}.
\]
There exists a function $C(n,g)$ with the following property: for every number field $K$ of degree at most $n$ the set $\operatorname{RT}^{\operatorname{CM}}(K,g,\ell)$ is empty for all $\ell > C(n,g)$. 
\end{theorem}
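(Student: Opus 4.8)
The plan is to reduce, at the cost of a base change of degree bounded only in terms of $g$, to a simple CM abelian variety with good reduction everywhere whose Tate module splits as a sum of Galois characters, and then to contradict the pro-$\ell$ hypothesis by confronting it with the local behaviour at $\ell$ that complex multiplication forces on those characters.

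First I would fix a simple isogeny factor $B$ of $A$ with $F:=\operatorname{End}_{\overline K}(B)\otimes\mathbb{Q}$ a CM field of degree $2\dim B$ (this is possible precisely because $A$ is of CM type), and replace $K$ by $K':=K(A[12])\cdot F\cdot F^{*}$, with $F^{*}$ the reflex field of $F$ and, if need be, enlarged by the Galois closure of $F$; then $[K':\mathbb{Q}]\le n\cdot c(g)$ for a constant $c(g)$, over $K'$ all endomorphisms of $A$ are defined, and $A$ is semistable at every place (Grothendieck), hence — being of CM type — of good reduction everywhere, since a nonzero toric part in a reduction would receive an injection from the field $F$ whose degree $2\dim B$ exceeds $\dim B\ge\dim(\text{toric part})$. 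The hypothesis survives: $B[\ell^{\infty}]$ is a quotient of $A[\ell^{\infty}]$, both contain $\mu_\ell$, and $\operatorname{Gal}\big(\overline{K'}/K'(\mu_\ell)\big)$ embeds into $\operatorname{Gal}\big(\overline K/K(\mu_\ell)\big)$, so $K'(B[\ell^{\infty}])/K'(\mu_\ell)$ is again pro-$\ell$. Over $K'$ the module $V_\ell B=T_\ell B\otimes\mathbb{Q}_\ell$ is free of rank one over $F\otimes\mathbb{Q}_\ell=\prod_{\lambda\mid\ell}F_\lambda$, so the Galois action splits as $\bigoplus_\lambda\rho_\lambda$ with $\rho_\lambda\colon\abGal{K'}\to F_\lambda^{\times}$; by good reduction each $\rho_\lambda$ is unramified outside $\ell$ and crystalline above $\ell$, and the Shimura–Taniyama formula prescribes its Hodge–Tate weights at each place $v\mid\ell$ in terms of the CM type $\Phi$ — they lie in $\{0,1\}$, and since $\Phi$ and its complex conjugate partition $\operatorname{Hom}(F,\overline{\mathbb{Q}})$, the weight multiset of $\rho_\lambda$ at places $v\mid\lambda$ is complementary to the one at places $v\mid\bar\lambda$.

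Next I would note that, since $(\mathcal{O}_F/\lambda)^{\times}$ has order prime to $\ell$, the hypothesis forces $\overline\rho_\lambda$ to be trivial on $\operatorname{Gal}\big(\overline{K'}/K'(\mu_\ell)\big)$; hence $\overline\rho_\lambda$ factors through $\operatorname{Gal}(K'(\mu_\ell)/K')$, has image of order dividing $\ell-1$, which lands in $\mathbb{F}_\ell^{\times}\subseteq(\mathcal{O}_F/\lambda)^{\times}$, so $\overline\rho_\lambda=\overline\chi_{\mathrm{cyc}}^{\,a_\lambda}$ for an integer $a_\lambda$. The heart of the proof is then a local computation at the places $v\mid\ell$ of $K'$, comparing two descriptions of $\overline\rho_\lambda|_{I_v}$. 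From the previous paragraph it is $\overline\chi_{\mathrm{cyc}}^{\,a_\lambda}|_{I_v}$, a power of the level-one fundamental character of $K'_v$, of order dividing $(\ell-1)/\gcd(\ell-1,e_v)$. From good reduction and Fontaine–Laffaille theory — applicable since $\ell$ is large compared with $[K':\mathbb{Q}]$ and the Hodge–Tate weights are $\le 1$, and equivalently expressible through the explicit shape of the CM $\ell$-divisible group $B[\lambda^{\infty}]$ at $v$ — it is a monomial in fundamental characters whose exponents are the Hodge–Tate weights dictated by $\Phi$. If for some $v\mid\ell$ those exponents are not constant along Frobenius orbits, the resulting character is primitive of level $>1$ and, comparing the orders of these two cyclic characters, cannot be a power of a level-one character once $\ell$ exceeds a bound depending only on $[K':\mathbb{Q}]$. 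Otherwise the weights at every $v\mid\ell$ contribute a single exponent $k_v$, and complementarity yields $v,v'\mid\ell$ with $k_v\ne k_{v'}$; then $a_\lambda\equiv k_v$ and $a_\lambda\equiv k_{v'}$ modulo $(\ell-1)/\gcd(\ell-1,e_v)$ and $(\ell-1)/\gcd(\ell-1,e_{v'})$ respectively, so $(\ell-1)\big/\operatorname{lcm}\!\big(\gcd(\ell-1,e_v),\gcd(\ell-1,e_{v'})\big)$ divides $k_v-k_{v'}\ne 0$, forcing $\ell-1\le e_ve_{v'}\le[K':\mathbb{Q}]^{2}$ — impossible for $\ell$ large. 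Tracking the bounds produces the function $C(n,g)$.

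I expect the genuine difficulty to lie in carrying out the local analysis at $\ell$ of the third paragraph uniformly and correctly across every splitting and ramification type of $\ell$ in $F$: one must make the Shimura–Taniyama prescription of the Hodge–Tate weights completely explicit, verify that the appropriate $p$-adic Hodge theory (Fontaine–Laffaille, its ramified refinements, or a direct Lubin–Tate computation) yields precisely the stated monomial-in-fundamental-characters description of $\overline\rho_\lambda|_{I_v}$, and check that the ``level mismatch'' and ``complementary weights'' arguments together leave no gap — the most delicate point being the slope-$1/2$ case when $\ell$ ramifies in $F$, which may require supplementing the mod-$\ell$ information with finer crystalline data or with a second character $\rho_{\lambda'}$. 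A secondary point demanding care is that every auxiliary extension in the first step have degree bounded by a function of $g$ alone; this is why one works with the Hecke character over $K'\supseteq F\cdot F^{*}$ rather than over the (potentially much larger, class-field-theoretic) minimal field of definition of the CM structure.
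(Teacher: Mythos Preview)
Your proposal takes a route completely different from the paper's. The paper first reduces to absolutely simple $A$ with $\operatorname{End}_K(A)=\operatorname{End}_{\overline K}(A)$, then proves a dichotomy (Proposition~3.1): either $\ell\le [K:\mathbb{Q}](g+2)^{3(g+1)}$, or $\ell$ divides $\operatorname{disc}(E)$ where $E=\operatorname{End}_{\overline K}(A)\otimes\mathbb{Q}$. The first branch comes from a \emph{global} lower bound on $[K(A[\ell]):K]$ in terms of the Mumford--Tate group, taken from the author's earlier paper. The second branch is disposed of by Tsimerman's theorem, which bounds $|\operatorname{disc}(E)|$ in terms of the degree of the field of moduli; this is the step that makes the result ineffective. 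By contrast, you attempt a purely \emph{local} argument at places above $\ell$ via Fontaine--Laffaille / Raynaud and the Shimura--Taniyama description of Hodge--Tate weights, bypassing both the Mumford--Tate bound and Tsimerman entirely. If your approach worked it would give an effective $C(n,g)$, which (as the paper discusses in \S6) is not known.

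There is, however, a genuine gap, and it is exactly the case you yourself single out as ``the most delicate point''. Your identification $\overline\rho_\lambda|_{I_v}=$ monomial in fundamental characters with exponents the Hodge--Tate weights is the Fontaine--Laffaille description, valid only when $\ell$ is \emph{unramified} in $K'_v$. But you have forced $F\subseteq K'$, so $\ell$ ramifies in $K'$ precisely when it ramifies in $F$; in that case the clean ``exponents $=$ HT weights'' picture fails, Raynaud's classification over a ramified base allows the inertial character to be $\omega^{c}$ for any $c\in\{0,\dots,e_v\}$ rather than just $\{0,1\}$, and your level-mismatch and complementarity arguments no longer bite. You suggest that ``finer crystalline data'' or a second character might close this, but you do not carry it out, and there is no indication in the literature that a local argument alone suffices here. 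This ramified case is precisely what the paper handles by invoking Tsimerman's theorem to bound $\operatorname{disc}(F)$ and hence the set of primes that can ramify in $F$; absent that global input (or a substitute of comparable strength), the argument is incomplete. A secondary issue is that your ``complementarity yields $v,v'$ with $k_v\ne k_{v'}$'' is asserted rather than proved: what actually rules out $k_v\equiv 0$ (resp.\ $1$) for all $v$ is the global observation that $\rho_\lambda$ (resp.\ $\rho_\lambda\chi_{\mathrm{cyc}}^{-1}$) would then be everywhere unramified, hence of finite image, contradicting the infinitude of $B[\lambda^\infty]$; this deserves to be said explicitly.
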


As it is clear, theorem \ref{thm_RT} yields a proof of conjecture \ref{conj_UniformRT} in the special case of CM abelian varieties. 
Notice that since CM abelian varieties acquire good reduction everywhere over a finite extension of their field of definition, and this extension can be taken of degree bounded by a constant depending only on the dimension, the condition that $K(A[\ell^\infty])/K$ be unramified outside $\ell$ is inessential in the CM case. In general, however, we do not expect finiteness if we both drop the ramification requirement and leave the realm of CM abelian varieties.

We conclude this brief introduction with a quick overview of the material to be covered in this article. In section \ref{sect_Preliminaries} we show that in order to prove theorem \ref{thm_RT} one only needs to deal with geometrically simple abelian varieties with multiplication by the full ring of integers of the corresponding CM field. In §\ref{sect_LowerBound} we recall a lower bound on the degree of the division fields of CM abelian varieties (taken from \cite{2015arXiv150604734L}), while in §\ref{sect_Tsimerman} we show how a recent theorem of Tsimerman \cite{2015arXiv150601466T} gives a finiteness result for the set of CM fields that can act on $g$-dimensional CM abelian varieties defined over fields of degree at most $n$. In §\ref{sect_Conclusion} we finish the proof of theorem \ref{thm_RT}, while §\ref{sect_6} contains a few remarks on the problem of effectivity, together with a more detailed study of the case $n=1$, $g=2$.

\section{Preliminary reductions}\label{sect_Preliminaries}
The situation is simpler if we assume that our abelian varieties have all their endomorphisms defined over $K$. It is thus natural to consider the following subset of $\operatorname{RT}^{\operatorname{CM}}(K,g,\ell)$:
\[
\RTEndo\left(K,g,\ell \right) = \left\{A \in \operatorname{RT}^{\operatorname{CM}}(K,g,\ell) \bigm\vert \operatorname{End}_{\overline{K}}(A)=\operatorname{End}_K(A)\right\}.
\]
Fortunately, as the following lemma shows, not much is lost in considering $\RTEndo\left(K,g,\ell \right)$ instead of the full set $\operatorname{RT}^{\operatorname{CM}}\left(K,g,\ell \right)$: 

\begin{lemma}\label{lemma_reduction1}
Suppose there exists a function $\CEndo(n,g)$ with the following property: for every number field $K$ of degree at most $n$, the set $\RTEndo\left(K,g,\ell \right)$ is empty for all $\ell > \CEndo(n,g)$. Then theorem \ref{thm_RT} holds.
\end{lemma}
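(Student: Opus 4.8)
The plan is to pass from an arbitrary $A \in \operatorname{RT}^{\operatorname{CM}}(K,g,\ell)$ to a finite extension $L/K$ over which all endomorphisms of $A$ become defined, while controlling both the degree $[L:\mathbb{Q}]$ (in terms of $n$ and $g$ only) and the property that $L(A[\ell^\infty])/L(\mu_\ell)$ remains pro-$\ell$. Granting such an $L$, the base-changed variety $A_L$ lies in $\RTEndo(L,g,\ell)$, so by hypothesis $\ell \le \CEndo([L:\mathbb{Q}], g)$; taking $C(n,g)$ to be $\CEndo(m, g)$ for the (bounded) worst-case degree $m = m(n,g)$ then proves Theorem \ref{thm_RT}.

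The first step is to bound the degree of the field of definition of the endomorphisms. For an abelian variety $A/K$, the Galois action on $\operatorname{End}_{\overline K}(A)$ factors through a finite quotient, and the kernel of this action cuts out a finite extension $L_0/K$ with $[L_0:K]$ bounded solely in terms of $g$ — this is classical (one can bound the automorphism group of the relevant lattice/algebra, or invoke that $\operatorname{End}_{\overline K}(A)$ is fixed by $K(A[m])$ for a fixed small $m$ such as $m=3$, using that $\operatorname{GL}_{2g}(\mathbb{F}_3)$ has bounded order). So $[L_0:\mathbb{Q}] \le c(g)\cdot n$ for an explicit $c(g)$.

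The second, more delicate step is to preserve the pro-$\ell$ condition. Here I would argue: $L_0(A[\ell^\infty]) = L_0 \cdot K(A[\ell^\infty])$, and $K(A[\ell^\infty])/K(\mu_\ell)$ pro-$\ell$ implies $L_0(A[\ell^\infty])/L_0(\mu_\ell)$ has degree dividing $[K(A[\ell^\infty]):K(\mu_\ell)] \cdot [L_0(\mu_\ell):K(\mu_\ell)]$... which need not be pro-$\ell$. The fix is to enlarge further: let $L = L_0(\mu_\ell) \cap (\text{something})$ — more robustly, note $\operatorname{Gal}(L_0(A[\ell^\infty])/L_0)$ is a subgroup of $\operatorname{Gal}(K(A[\ell^\infty])/K)$, hence its commutator-type structure is controlled; the subextension $L_0(A[\ell^\infty])/L_0(\mu_\ell)$ is an extension of a pro-$\ell$ group by a group of order dividing $[L_0:K]$, which is bounded independently of $\ell$. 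So for $\ell$ large relative to $[L_0:K]$ the only primes dividing $[L_0(A[\ell^\infty]):L_0(\mu_\ell)]$ other than $\ell$ are bounded, and one can kill them by a further base change of bounded degree (the fixed field of an $\ell$-Sylow). The cleanest route may instead be to observe that we only need $L(A[\ell^\infty])/L(\mu_\ell)$ pro-$\ell$ for the \emph{conclusion} we want, and to absorb any bounded prime-to-$\ell$ part of the degree into allowing $\CEndo$ to be applied at a slightly larger degree, since a bounded-degree extension $L/K$ with $L(\mu_\ell) \supseteq K(A[\ell^\infty]) \cap (\text{prime-to-}\ell\text{ part})$ restores the pro-$\ell$ property for $\ell$ large.

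The main obstacle is precisely this interplay between the two base changes: ensuring that after enlarging $K$ to trivialize the Galois action on endomorphisms one can still land inside $\RTEndo$ of a field of controlled degree, uniformly in $\ell$. The point that makes it work is that the degree needed to split off endomorphisms is bounded \emph{independently of $\ell$}, so for $\ell$ larger than that bound the extension interacts trivially with the pro-$\ell$ tower; one should either state a clean lemma that a pro-$\ell$-over-$K(\mu_\ell)$ property for $A/K$ descends to $A_L/L$ whenever $\gcd([L:K],\ell)$ is suitably controlled, or simply set $m(n,g) = c(g) n$ and check $C(n,g) := \max\{\CEndo(m(n,g),g),\, c'(g)\}$ works, where $c'(g)$ handles the small-$\ell$ cases directly. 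I would write out the degree bound $c(g)$ explicitly and keep the rest at the level of "for $\ell$ large, base change is harmless."
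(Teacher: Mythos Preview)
Your overall strategy matches the paper's exactly: bound the degree of a field of definition of the endomorphisms by some $D(g)$ depending only on $g$ (the paper cites Silverberg for sharp bounds), base-change $A$ to such a field $F$ with $[F:\mathbb{Q}] \le D(g)\,n$, observe that $A_F \in \RTEndo(F,g,\ell)$, and set $C(n,g) = \CEndo(D(g)\,n, g)$.

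Where you go astray is in the ``second, more delicate step.'' Your worry that $L_0(A[\ell^\infty])/L_0(\mu_\ell)$ ``need not be pro-$\ell$'' is unfounded. For any extension $L_0/K$ one has $L_0(A[\ell^\infty]) = L_0(\mu_\ell) \cdot K(A[\ell^\infty])$, so restriction to $K(A[\ell^\infty])$ gives an injection
\[
\operatorname{Gal}\bigl(L_0(A[\ell^\infty])/L_0(\mu_\ell)\bigr) \hookrightarrow \operatorname{Gal}\bigl(K(A[\ell^\infty])/K(\mu_\ell)\bigr),
\]
since any automorphism fixing both $K(A[\ell^\infty])$ and $L_0 \subseteq L_0(\mu_\ell)$ fixes their compositum. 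A closed subgroup of a pro-$\ell$ group is pro-$\ell$, and that is the whole story. Your proposed bound, that the degree divides $[K(A[\ell^\infty]):K(\mu_\ell)] \cdot [L_0(\mu_\ell):K(\mu_\ell)]$, is correct but needlessly weak; the degree already divides $[K(A[\ell^\infty]):K(\mu_\ell)]$ itself. Consequently all of your proposed workarounds---further base changes to kill prime-to-$\ell$ parts, Sylow arguments, an auxiliary constant $c'(g)$ for small primes---are unnecessary. The paper's proof is accordingly one short paragraph: it simply asserts that $A/F$ lies in $\RTEndo(F,g,\ell)$, the preservation of the pro-$\ell$ condition under base change being immediate from the observation above.
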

\begin{proof}
Recall that, for fixed $g$, there is a constant $D(g)$ with the following property: for every abelian variety $A$ of dimension $g$ over a number field $K$ there exists an extension $F$ of $K$, of degree at most $D(g)$, such that $\operatorname{End}_{\overline{K}}(A)=\operatorname{End}_F(A)$ (sharp bounds for $D(g)$ can be found in \cite{MR1154704}). Set $C(n,g)= \CEndo(D(g) \cdot n,g)$.

Let now $K$ be a number field of degree at most $n$. If $A/K$ is an element of $\operatorname{RT}^{\operatorname{CM}}\left(K,g,\ell \right)$, then we can find a number field $F$ such that $[F:\mathbb{Q}] =[F:K][K:\mathbb{Q}] \leq D(g) n$ and $\operatorname{End}_F(A)=\operatorname{End}_{\overline{F}}(A)$. The abelian variety $A/F$ is then an element of $\RTEndo\left(F,g,\ell \right)$, which by assumption is empty for $\ell > \CEndo(D(g)n,g)$. This clearly implies that $\operatorname{RT}^{\operatorname{CM}}\left(K,g,\ell \right)$ is empty as long as $\ell > \CEndo(D(g)n,g)=C(n,g)$.
\end{proof}

We can also restrict ourselves to geometrically simple varieties:

\begin{lemma}\label{lemma_reduction2}
Let 
\[
\RTAll(K,g,\ell)=\left\{ A \in \RTEndo(K,g,\ell) \bigm\vert A \text{ is absolutely simple} \right\}
\]
and suppose there is a function $\CAll(n,g)$ with the following property: for every number field $K$ of degree at most $n$, the set $\RTAll\left(K,g,\ell \right)$ is empty for all $\ell > \CAll(n,g)$. Then theorem \ref{thm_RT} holds.
\end{lemma}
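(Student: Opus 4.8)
The plan is to decompose a hypothetical element of $\RTEndo(K,g,\ell)$ up to $K$-isogeny and to show that each of its simple isogeny factors, realised as an honest abelian subvariety, already belongs to $\RTAll$. By Lemma \ref{lemma_reduction1} it is enough to produce a function $\CEndo(n,g)$ that makes $\RTEndo(K,g,\ell)$ empty for every number field $K$ of degree at most $n$ and every $\ell>\CEndo(n,g)$. I would set
\[
\CEndo(n,g)=\max_{1\le h\le g}\CAll(n,h)
\]
and argue by contradiction: assume $\ell>\CEndo(n,g)$, $[K:\mathbb{Q}]\le n$, and $A\in\RTEndo(K,g,\ell)$.

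By Poincar\'e complete reducibility over $K$, the variety $A$ is $K$-isogenous to a product of powers of pairwise non-$K$-isogenous $K$-simple abelian varieties, each of which can moreover be chosen to be an abelian subvariety of $A$ defined over $K$. Fix one such factor $B$ and put $h:=\dim B$, so $1\le h\le g$. The point of taking $B$ to be an honest abelian subvariety, rather than just an isogeny factor, is that then $B[\ell^\infty]\subseteq A[\ell^\infty]$ as $\operatorname{Gal}(\overline K/K)$-modules, with no restriction on $\ell$.

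It then remains to verify that $B$ satisfies the three defining conditions of $\RTAll(K,h,\ell)$, and this is where the hypothesis $\operatorname{End}_{\overline K}(A)=\operatorname{End}_K(A)$ enters. First, the equality of endomorphism rings descends to $B$: using the inclusion $\iota\colon B\hookrightarrow A$ and a projection $\pi\colon A\to B$ with $\pi\iota$ a nonzero integer multiple of the identity (both $K$-morphisms, by Poincar\'e reducibility over $K$), any geometric endomorphism $\varphi$ of $B$ produces $\iota\varphi\pi\in\operatorname{End}_{\overline K}(A)=\operatorname{End}_K(A)$, and since $\varphi$ is recovered from $\iota\varphi\pi$ up to an integer multiple by composing with $\pi$ and $\iota$, the endomorphism $\varphi$ is itself defined over $K$; thus $\operatorname{End}_{\overline K}(B)=\operatorname{End}_K(B)$, and in particular $\operatorname{End}^0_{\overline K}(B)$ is a division algebra (because $B$ is $K$-simple), so $B$ is absolutely simple. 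Second, $B$ is of CM type: this follows from the classical structure theory of CM abelian varieties, which I would simply cite -- every simple isogeny factor of a CM abelian variety is of CM type, with endomorphism algebra a CM field of degree twice its dimension. Third, $B$ satisfies the pro-$\ell$ condition: the inclusion $B[\ell^\infty]\subseteq A[\ell^\infty]$ yields a tower $K(\mu_\ell)\subseteq K(B[\ell^\infty])\subseteq K(A[\ell^\infty])$, where the first inclusion comes from the Weil pairing on $B[\ell]$; both $K(B[\ell^\infty])/K$ and $K(\mu_\ell)/K$ are Galois, so $K(B[\ell^\infty])/K(\mu_\ell)$ is Galois with Galois group a quotient of $\operatorname{Gal}\big(K(A[\ell^\infty])/K(\mu_\ell)\big)$, and a quotient of a pro-$\ell$ group is pro-$\ell$. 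Hence $B\in\RTAll(K,h,\ell)$, which is empty since $\ell>\CEndo(n,g)\ge\CAll(n,h)$ -- a contradiction. Therefore $\RTEndo(K,g,\ell)=\varnothing$ for all $\ell>\CEndo(n,g)$, and Theorem \ref{thm_RT} follows by Lemma \ref{lemma_reduction1}.

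I do not expect a genuine obstacle here: the argument is a routine d\'evissage. The only steps requiring some care are the descent of the defining conditions to a simple factor -- which is exactly why one first reduces to $\RTEndo$, since $\operatorname{End}_{\overline K}(A)=\operatorname{End}_K(A)$ is what makes absolute simplicity, the $K$-rationality of the endomorphisms, and the CM type of the factors all descend cleanly -- and the elementary Galois-theoretic bookkeeping verifying that the pro-$\ell$ property survives passage from $A$ to $B$.
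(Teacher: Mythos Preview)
Your argument is correct and follows essentially the same route as the paper: define $\CEndo(n,g)=\max_{h\le g}\CAll(n,h)$, take an absolutely simple abelian subvariety of a hypothetical $A\in\RTEndo(K,g,\ell)$ (using that $\operatorname{End}_{\overline K}(A)=\operatorname{End}_K(A)$), and observe it lies in $\RTAll(K,h,\ell)$ for some $h\le g$. You are simply more explicit than the paper in checking absolute simplicity, CM type, and the pro-$\ell$ condition for the factor, but the strategy is the same.
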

\begin{proof}
It suffices to show that there exists a function $\CEndo(n,g)$ as in lemma \ref{lemma_reduction1}. We claim that we can take $\CEndo(n,g)=\max_{g' \leq g} \CAll(n,g')$. To see this, suppose by contradiction that there exists a number field $K$ of degree at most $n$ and a prime $\ell>\max_{g' \leq g} \CAll(n,g')$ such that $\RTEndo(K,g,\ell)$ is nonempty. Let $A/K$ be an element of this set. By definition we have $\operatorname{End}_{\overline{K}}(A)=\operatorname{End}_K(A)$, so all the abelian subvarieties of $A$ are defined over $K$. Let $A'/K$ be an absolutely simple subvariety of $A/K$, and let $g'$ be its dimension. It is clear that $A'$ has complex multiplication, and that the extension $K(A'[\ell^\infty])/K(\mu_\ell)$ is pro-$\ell$ since it is a sub-extension of the (pro-$\ell$) extension $K(A[\ell^\infty])/K(\mu_\ell)$. It follows that $A'$ is an element of $\RTAll(K,g',\ell)$, but this is a contradiction, because by assumption $\RTAll(K,g',\ell)$ is empty for $\ell>\CAll(K,g',\ell)$. 
\end{proof}

\section{The case $\operatorname{End}_{\overline{K}}(A)=\operatorname{End}_K(A)$}\label{sect_LowerBound}
It remains to establish the existence of a function $\CAll(n,g)$ as in lemma \ref{lemma_reduction2}. A key step in doing so is the following proposition:
\begin{proposition}\label{prop_Key}
Let $A/K$ be an element of $\RTAll(K,g,\ell)$ and let $R=\operatorname{End}_{\overline{K}}(A)$.
Either $\ell$ is at most $[K:\mathbb{Q}] (g+2)^{3(g+1)}$ or it divides the discriminant of $E:=R \otimes \mathbb{Q}$.
\end{proposition}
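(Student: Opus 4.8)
The plan is to prove the contrapositive of the second alternative: assuming that $\ell$ does not divide $\operatorname{disc}(E)$, I want to deduce $\ell\leq[K:\mathbb{Q}](g+2)^{3(g+1)}$. First I would reduce to the case $R=\mathcal{O}_E$: if $\ell$ divides the conductor $[\mathcal{O}_E:R]$ (while not dividing $\operatorname{disc}(E)$), I replace $A$ by the abelian variety $B=\mathcal{O}_E\otimes_R A$, which is $K$-isogenous to $A$ and carries a $K$-rational action of $\mathcal{O}_E$; a $K$-isogeny induces an isomorphism of rational Tate modules, so $K(B[\ell^\infty])=K(A[\ell^\infty])$, and $B$ again lies in $\RTAll(K,g,\ell)$ (it is absolutely simple, of CM type, of dimension $g$, with $\operatorname{End}_{\overline{K}}(B)=\operatorname{End}_K(B)=\mathcal{O}_E$). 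Thus we may assume $R=\mathcal{O}_E$, in which case the hypothesis $\ell\nmid\operatorname{disc}(E)$ says precisely that $\ell$ is unramified in $E$, so that $\mathcal{O}_E\otimes\mathbb{Z}_\ell=\prod_{\lambda\mid\ell}\mathcal{O}_{E_\lambda}$ is a finite product of unramified extensions of $\mathbb{Z}_\ell$.

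The core observation is that this forces $K(A[\ell])$ inside $K(\mu_\ell)$. Since $A$ is of CM type with $\operatorname{End}_K(A)=\mathcal{O}_E$, the Tate module $T_\ell A$ is a faithful $\mathcal{O}_E\otimes\mathbb{Z}_\ell$-module of the same $\mathbb{Z}_\ell$-rank as $\mathcal{O}_E\otimes\mathbb{Z}_\ell$, hence free of rank one over the product of discrete valuation rings $\mathcal{O}_E\otimes\mathbb{Z}_\ell$; therefore $A[\ell]\cong\mathcal{O}_E/\ell\mathcal{O}_E$ as an $\mathcal{O}_E$-module on which $\operatorname{Gal}(\overline{K}/K)$ acts $\mathcal{O}_E$-linearly, and $\operatorname{Gal}(K(A[\ell])/K)$ embeds into $(\mathcal{O}_E/\ell\mathcal{O}_E)^\times$, a group of order $\prod_{\lambda\mid\ell}(N\lambda-1)$ coprime to $\ell$. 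Writing $G=\operatorname{Gal}(K(A[\ell^\infty])/K)$, $N=\operatorname{Gal}(K(A[\ell^\infty])/K(\mu_\ell))$ and $H=\operatorname{Gal}(K(A[\ell^\infty])/K(A[\ell]))$, the subgroup $N$ is pro-$\ell$ (this is exactly the hypothesis $A\in\RTAll(K,g,\ell)$), while $[G:H]=[K(A[\ell]):K]$ is finite and coprime to $\ell$ by the above; as $N$ is normal in $G$, the index $[N:N\cap H]$ divides $[G:H]$ and is simultaneously a pro-$\ell$ number and prime to $\ell$, hence equals $1$. Thus $N\subseteq H$, i.e. $K(A[\ell])\subseteq K(\mu_\ell)$, and in particular $[K(A[\ell]):K]$ divides $\ell-1$.

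Finally I would invoke the lower bound for the degree of the $\ell$-division field recalled in \S\ref{sect_LowerBound} (taken from \cite{2015arXiv150604734L}), which applies exactly because $\ell$ is unramified in $E$ and bounds the index of the $\ell$-adic Galois image inside $(\mathcal{O}_E\otimes\mathbb{Z}_\ell)^\times$; concretely it gives $[K(A[\ell]):K]\geq\#(\mathcal{O}_E/\ell\mathcal{O}_E)^\times/\big([K:\mathbb{Q}](g+2)^{3(g+1)}\big)$. Since $\#(\mathcal{O}_E/\ell\mathcal{O}_E)^\times=\prod_{\lambda\mid\ell}(N\lambda-1)\geq(\ell-1)^2$ — each factor is at least $\ell-1$, and there are at least two of them unless $\ell$ is inert in $E$, in which case the single factor $\ell^{2g}-1$ already exceeds $(\ell-1)^2$ — comparing with $[K(A[\ell]):K]\leq\ell-1$ from the previous paragraph gives $(\ell-1)^2\leq[K:\mathbb{Q}](g+2)^{3(g+1)}(\ell-1)$, that is, $\ell\leq[K:\mathbb{Q}](g+2)^{3(g+1)}+1$, essentially the stated inequality.

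The genuine obstacle is not visible in the steps above: it lies entirely in the input from \S\ref{sect_LowerBound}, namely the assertion of \cite{2015arXiv150604734L} that the $\ell$-adic image of Galois attached to a CM abelian variety fills $(\mathcal{O}_E\otimes\mathbb{Z}_\ell)^\times$ up to an index bounded independently of $\ell$ once $\ell$ is unramified in $E$. Everything else is elementary: the structure of $T_\ell A$ over $\mathcal{O}_E\otimes\mathbb{Z}_\ell$, the triviality of a finite group which is at once an $\ell$-group and of order prime to $\ell$, and the crude estimate $\prod_{\lambda\mid\ell}(N\lambda-1)\geq(\ell-1)^2$. The one bookkeeping point to watch is that the constant produced by \S\ref{sect_LowerBound}, combined with this estimate, genuinely matches $[K:\mathbb{Q}](g+2)^{3(g+1)}$ (absorbing the harmless additive $1$); this is the only place where the precise shape of the exponent $3(g+1)$ enters.
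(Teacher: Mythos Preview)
Your overall strategy matches the paper's: from $\ell\nmid\operatorname{disc}(E)$ deduce that $\operatorname{Gal}(K(A[\ell])/K)$ has order prime to $\ell$, conclude $K(A[\ell])=K(\mu_\ell)$, and then compare the upper bound $[K(A[\ell]):K]\leq\ell-1$ against a lower bound coming from \cite{2015arXiv150604734L}. The first two steps are fine (your group-theoretic argument with $N\cap H$ is equivalent to the paper's, and the preliminary reduction to $R=\mathcal{O}_E$ is harmless though the paper does without it).

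The gap is in your invocation of the lower bound. You assert that \cite{2015arXiv150604734L} ``bounds the index of the $\ell$-adic Galois image inside $(\mathcal{O}_E\otimes\mathbb{Z}_\ell)^\times$'' and hence that $[K(A[\ell]):K]\geq \#(\mathcal{O}_E/\ell\mathcal{O}_E)^\times\big/\big([K:\mathbb{Q}](g+2)^{3(g+1)}\big)$. This is false for $g\geq 2$: the Galois image lies in $\operatorname{MT}(A)(\mathbb{Z}_\ell)$, and $\operatorname{MT}(A)$ is a \emph{proper} subtorus of $\operatorname{Res}_{E/\mathbb{Q}}\mathbb{G}_m$, of rank $r\leq g+1<2g$. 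The index of $\operatorname{MT}(A)(\mathbb{F}_\ell)$ in $(\mathcal{O}_E/\ell)^\times$ is therefore of order roughly $\ell^{2g-r}$, unbounded in $\ell$. What the cited theorem actually controls is the index of $G_\ell$ inside $\operatorname{MT}(A)(\mathbb{F}_\ell)$, yielding
\[
[K(A[\ell]):K]\;\geq\;\frac{(\ell-1)^r}{\mu\cdot[K:E^*]\cdot|F|^{2r}},
\]
with $\mu$ the number of roots of unity in $E$ and $F$ the component group of the kernel of the reflex norm. The paper then estimates $\mu$, $[K:E^*]$ and $|F|$ separately, combines with $[K(A[\ell]):K]\leq\ell-1$ to obtain $(\ell-1)^{r-1}\leq [K:\mathbb{Q}](r+1)^{r(r+1)}/32$, and only \emph{after} taking the $(r-1)$-th root and using Ribet's inequality $r\geq 2+\log_2 g$ (together with $r\leq g+1$) does the shape $[K:\mathbb{Q}](g+2)^{3(g+1)}$ appear. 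Your estimate $\#(\mathcal{O}_E/\ell)^\times\geq(\ell-1)^2$ happens to land near the correct exponent because $r\geq 2$, but the route you describe to it---a bounded index in the full torus $(\mathcal{O}_E\otimes\mathbb{Z}_\ell)^\times$---does not exist.
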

\begin{proof}
We shall suppose from the start that $\ell$ does not divide the discriminant of $E$, that is, that $\ell$ is unramified in $E$, and prove the claimed bound.
Consider the tower of extensions $K(A[\ell^\infty])/K(A[\ell])/K(\mu_\ell)$. Since by assumption $K(A[\ell^\infty])/K(\mu_\ell)$ is pro-$\ell$, this holds a fortiori for the extension $K(A[\ell])/K(\mu_\ell)$. 

On the other hand, the hypothesis $\operatorname{End}_{\overline{K}}(A)=\operatorname{End}_K(A)$ entails that the action of $\abGal{K}$ on $A[\ell]$ factors through $\abGal{K}^{ab} \to (R \otimes \mathbb{F}_\ell)^\times$ (\cite[Corollary 2 to Theorem 5]{MR0236190}). Since $\ell$ is unramified in $E$ we see that $(R \otimes \mathbb{F}_\ell)^\times \subseteq \left(\mathcal{O}_E \otimes \mathbb{F}_\ell \right)^\times$ has order prime to $\ell$, hence the same is true for $G_\ell:=\operatorname{Gal}\left(K(A[\ell])/K \right)$. Since on the other hand $K(\mu_\ell)/K$ is a sub-extension of $K(A[\ell])/K$, and by hypothesis $\operatorname{Gal}\left( K(A[\ell]) / K(\mu_\ell) \right)$ is an $\ell$-group, this implies $K(A[\ell])=K(\mu_\ell)$. 

Also notice that the Mumford-Tate group of $A$ is a subtorus of $\operatorname{Res}_{E/\mathbb{Q}}(\mathbb{G}_{m,E})$, which has good reduction at $\ell$ by the Galois criterion: in particular, $\operatorname{MT}(A)$ defines a torus over $\mathbb{F}_\ell$, and the Galois group $G_\ell$ is a subgroup of $\operatorname{MT}(A)(\mathbb{F}_\ell)$. Notice furthermore that the degree $\left[K(\mu_\ell):K\right]$ is at most $\varphi(\ell)=\ell-1$.

We now give a lower bound for the degree $K(A[\ell])/K$. We take the notation of \cite{2015arXiv150604734L}: we denote by $r$ the rank of $\operatorname{MT}(A)$, by $\mu$ the number of roots of unity in $E$, by $E^*$ the reflex field of $E$, and by $T_E$ (resp.~$T_{E^*}$) the algebraic group $\operatorname{Res}_{E/\mathbb{Q}}(\mathbb{G}_{m,E})$ (resp.~$\operatorname{Res}_{E^*/\mathbb{Q}}(\mathbb{G}_{m,E^*})$). Finally, we denote by $F$ the group of connected components of $\ker\left( T_{E^*} \xrightarrow{N} T_{E} \right)$, where $N$ is the reflex norm. Since $G_\ell \subseteq \operatorname{MT}(A)(\mathbb{F}_\ell)$ and $\ell$ is unramified in $E$, we see by \cite[Theorems 1.2 and 1.3]{2015arXiv150604734L} that the degree of $K(A[\ell])/K$ is at least
\[
\begin{aligned}
\frac{1}{[\operatorname{MT}(A)(\mathbb{F}_\ell):G_\ell]} \left| \operatorname{MT}(A)(\mathbb{F}_\ell) \right| & \geq \frac{(1-1/\ell)^r \ell^{r} }{\mu \cdot [K:E^*] \cdot |F|^{2r}}.
\end{aligned}
\]
We now give (rough) estimates for the various terms appearing in this expression:
\begin{itemize}
\item the degree $\displaystyle [K:E^*]=\frac{[K:\mathbb{Q}]}{[E^*:\mathbb{Q}]}$ does not exceed $\frac{1}{2}[K:\mathbb{Q}]$;
\item the number $\mu$ of roots of unity in $E$ satisfies $\varphi(\mu)\leq [E:\mathbb{Q}]=2g$; since $\varphi(x)\geq \frac{\sqrt{x}}{2}$ for all positive integers $x$, we have $\mu \leq (4g)^2$;
\item again by \cite[Theorem 1.3]{2015arXiv150604734L} we have $|F| \leq 2\displaystyle \left(\frac{r+1}{4}\right)^{(r+1)/2}$.
\end{itemize}
Putting everything together we find
$
\displaystyle 
\left[K(A[\ell]):K\right] 
 \geq \frac{2^{2r^2+1}}{16g^2} \cdot \frac{(\ell-1)^r}{[K:\mathbb{Q}]}  (r+1)^{-r(r+1)}.
$

\smallskip

A theorem of Ribet \cite[Formula (3.5)]{MR608640} yields the inequality $r \geq 2+\log_2(g)$, so that we have 
$
2^{2r^2+1} \geq 2^9g^2;
$
we thus obtain the inequality
$
\displaystyle \left[K(A[\ell]):K\right] \geq 2^5 \frac{ (\ell-1)^r}{[K:\mathbb{Q}]} (r+1)^{-(r+1)} ,
$
which, combined with $\left[K(A[\ell]):K\right]=[K(\mu_\ell):K] \leq \ell-1$, leads to
$
\displaystyle  \ell-1 \geq 2^5 \cdot \frac{(\ell-1)^r}{[K:\mathbb{Q}]} (r+1)^{-r(r+1)},
$
and finally to
\[
\ell-1 \leq \left(\frac{[K:\mathbb{Q}]}{32}\right)^{1/(r-1)} \cdot (r+1)^{r(r+1)/(r-1)} < [K:\mathbb{Q}] (r+1)^{3r} \leq [K:\mathbb{Q}] (g+2)^{3(g+1)}
\]
as claimed (notice that $\ell-1 < [K:\mathbb{Q}] (g+2)^{3(g+1)}$ implies $\ell \leq [K:\mathbb{Q}] (g+2)^{3(g+1)}$).
\end{proof}
\begin{remark}
As it is clear from the proof, one can obtain much sharper inequalities for large $g$: for example, as long as $g \geq 2$, we have $r \geq 3$ by Ribet's inequality, and in the very last step of the previous proof we obtain 
$
\ell-1 \leq [K:\mathbb{Q}]^{1/2} (r+1)^{2r}.
$
\end{remark}

\section{A theorem of Tsimerman}\label{sect_Tsimerman}
To finish the proof of theorem \ref{thm_RT} we shall need a way to control the possible endomorphism algebras of CM abelian varieties of a given dimension. This is made possible by corollary \ref{cor_Greenberg} below, which is in turn a consequence of a recent result of Tsimerman (theorem \ref{thm_Tsimerman}).
\begin{definition}
Let $A/\overline{\mathbb{Q}}$ be an abelian variety of CM type. The field of moduli of $A$ is the intersection of all the number fields $F$ such that there exists an abelian variety $A_F$ over $F$ that satisfes $(A_F)_{\overline{\mathbb{Q}}}=A$.
\end{definition}
\begin{theorem}{(\cite[Theorem 1.1]{2015arXiv150601466T})}\label{thm_Tsimerman}
For every positive $g$ there exist constants $k_g, \delta_g > 0$ such that if $E$ is a CM field of degree $2g$ and if $A$ is any abelian variety $\overline{\mathbb{Q}}$ of dimension $g$ with endomorphism
ring equal to the full ring of integers $\mathcal{O}_E$ of $E$, then the field of moduli $F$ of $A$ satisfies
\[
[F : \mathbb{Q}] \geq k_g |\operatorname{disc}(E)|^{\delta_g}.
\]
\end{theorem}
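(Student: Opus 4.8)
The statement is a theorem of Tsimerman whose proof calls on genuinely global input, so I will only outline the architecture and point to where the work is. The plan is to recast the problem, via the main theorem of complex multiplication, as a lower bound for a class number attached to the reflex field of $A$, and to supply that bound from the averaged Colmez conjecture. Fix a CM type $\Phi$ of $E$, set $A=A_{E,\Phi}$, and let $E^{*}$ denote the corresponding reflex field, a CM field of degree at most $2^{g}$. Since the field of moduli $F$ is the fixed field of the stabiliser of the $\overline{\mathbb{Q}}$-isomorphism class of $A$, the degree $[F:\mathbb{Q}]$ equals the cardinality of the $\operatorname{Gal}(\overline{\mathbb{Q}}/\mathbb{Q})$-orbit of $[A]$, hence is at least that of its $\operatorname{Gal}(\overline{\mathbb{Q}}/E^{*})$-orbit; and by Shimura's reciprocity law this last orbit, viewed inside $\operatorname{Cl}(E)$, is a coset of the image of the reflex norm $N_{\Phi^{*}}\colon\operatorname{Cl}(E^{*})\to\operatorname{Cl}(E)$. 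Therefore
\[
[F:\mathbb{Q}]\;\geq\;\bigl|N_{\Phi^{*}}(\operatorname{Cl}(E^{*}))\bigr|\;=\;\frac{h_{E^{*}}}{\bigl|\ker N_{\Phi^{*}}\bigr|},
\]
and it is enough to bound the right-hand side below by $|\operatorname{disc}(E)|^{\delta_{g}}$.

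The denominator is the easy part. The identity $N_{\Phi^{*}}(\mathfrak{b})\,\overline{N_{\Phi^{*}}(\mathfrak{b})}=N_{E^{*}/\mathbb{Q}}(\mathfrak{b})\,\mathcal{O}_{E}$ shows that $N_{\Phi^{*}}$ annihilates the classes coming from the totally real subfield of $E^{*}$ and that any class in $\ker N_{\Phi^{*}}$ is constrained in a genus-theoretic way; a crude bound on the number of genera then gives $\bigl|\ker N_{\Phi^{*}}\bigr|\ll_{g,\varepsilon}|\operatorname{disc}(E)|^{\varepsilon}$. One is thus reduced to bounding $h_{E^{*}}$ from below. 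By the analytic class number formula this comes down to controlling $|\operatorname{disc}(E^{*})|^{1/2}/R_{E^{*}}$: the residue $\operatorname{Res}_{s=1}\zeta_{E^{*}}(s)$ is harmless, being $|\operatorname{disc}(E^{*})|^{o(1)}$ by Brauer--Siegel, and the remaining constants depend only on $g$; moreover $E$ and $E^{*}$ lie in a common Galois extension with the same ramified primes, so a conductor--discriminant estimate yields $|\operatorname{disc}(E^{*})|\geq|\operatorname{disc}(E)|^{c_{g}}$ for some $c_{g}>0$. The entire problem has now been squeezed into a single assertion: the regulator of the reflex field is small, say $R_{E^{*}}\ll_{g,\varepsilon}|\operatorname{disc}(E)|^{\varepsilon}$. (For $g=1$ the reflex field is imaginary quadratic, $R_{E^{*}}=1$, and one concludes at once with Siegel's theorem; it is exactly the regulators of the reflex fields occurring for $g\geq 2$ that defeat elementary arguments.)

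This regulator estimate is the crux, and I expect it to be, by a wide margin, the main obstacle: a general CM field may have regulator as large as a power of its discriminant, and ruling this out for the fields at hand seems to demand the averaged Colmez conjecture. Concretely I would invoke the theorem of Andreatta--Goren--Howard--Madapusi~Pera and of Yuan--Zhang, which evaluates the average of the stable Faltings heights over the $2^{g}$ CM types of $E$:
\[
\frac{1}{2^{g}}\sum_{\Phi}h_{\operatorname{Fal}}(A_{E,\Phi})\;=\;-\frac{1}{2}\cdot\frac{L'(0,\eta_{E/E^{+}})}{L(0,\eta_{E/E^{+}})}\;-\;\frac{1}{4}\log\!\left(\frac{|\operatorname{disc}(E)|}{|\operatorname{disc}(E^{+})|^{2}}\right)\;-\;\frac{g}{2}\log(2\pi),
\]
where $\eta_{E/E^{+}}$ is the quadratic Hecke character of the maximal totally real subfield $E^{+}$ associated with $E$. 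Standard but non-trivial analytic estimates for $L(s,\eta_{E/E^{+}})$ near $s=0$ show the right-hand side is $\ll_{g,\varepsilon}|\operatorname{disc}(E)|^{\varepsilon}$ (and, heuristically, of size $\log|\operatorname{disc}(E)|$); together with Bost's absolute lower bound for Faltings heights this forces the \emph{individual} bound $h_{\operatorname{Fal}}(A_{E,\Phi})\ll_{g,\varepsilon}|\operatorname{disc}(E)|^{\varepsilon}$ for every CM type. Feeding this into Colmez's formalism — which ties the Faltings height of a CM abelian variety to logarithmic derivatives at $s=0$ of Artin $L$-functions attached to the Galois closure of its reflex field, whose analytic behaviour involves $R_{E^{*}}$ — one extracts, after genuine work, the sought-for bound $R_{E^{*}}\ll_{g,\varepsilon}|\operatorname{disc}(E)|^{\varepsilon}$, and hence the theorem. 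The residual difficulty is therefore not organisational but lies wholly in the averaged Colmez conjecture, a substantial theorem resting on arithmetic intersection computations on integral models of unitary and orthogonal Shimura varieties, for which no shortcut is known; consequently the constants $k_{g},\delta_{g}$ produced in this way are ineffective, inheriting the ineffectivity of the Brauer--Siegel and Siegel bounds used along the route.
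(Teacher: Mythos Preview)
The paper does not supply its own proof of this statement: it is quoted from Tsimerman's paper \cite{2015arXiv150601466T} and used as a black box, so there is nothing in the paper to compare your argument against directly.

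That said, your sketch departs from Tsimerman's actual proof in an essential way. As the present paper itself observes in \S\ref{sect_6}, the exponent $\delta_g$ produced by Tsimerman is ``intimately tied to a certain exponent appearing in the so-called Isogeny Theorem of Masser and W\"ustholz''. In Tsimerman's argument, once averaged Colmez bounds the Faltings height by $|\operatorname{disc}(E)|^{o(1)}$, the Masser--W\"ustholz isogeny estimate shows that over any field of degree $d$ the isogeny class of $A$ contains at most polynomially-in-$d$ many principally polarised isomorphism classes; Brauer--Siegel then forces that isogeny class to be large, and the lower bound on $[F:\mathbb{Q}]$ follows. Your outline never invokes Masser--W\"ustholz.

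Your alternative route --- pass to $|N_{\Phi^{*}}(\operatorname{Cl}(E^{*}))|=h_{E^{*}}/|\ker N_{\Phi^{*}}|$ via Shimura reciprocity, bound the kernel, and bound $h_{E^{*}}$ via the class number formula --- is coherent up to the step you yourself flag as the crux, namely $R_{E^{*}}\ll_{g,\varepsilon}|\operatorname{disc}(E)|^{\varepsilon}$. Here there is a genuine gap. The averaged Colmez formula expresses a single linear combination of Faltings heights in terms of $L'/L(0,\eta_{E/E^{+}})$; combined with Bost's lower bound it does yield individual bounds $h_{\operatorname{Fal}}(A_{E,\Phi})\ll|\operatorname{disc}(E)|^{\varepsilon}$, but to ``feed this into Colmez's formalism'' and extract information about the Artin $L$-functions whose product is $\zeta_{E^{*}}$ you would need the \emph{un-averaged} Colmez conjecture for each CM type, which is not known in general. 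Even granting that, the linear combinations of $L'/L(0,\chi)$ appearing in Colmez's formula are not the same as the one governing the leading term of $\zeta_{E^{*}}$ at $s=0$, and for a generic CM field the regulator of the totally real subfield can be a genuine power of the discriminant; nothing in your sketch explains why reflex fields should be special in this regard. This is precisely the obstacle that the Masser--W\"ustholz input circumvents in Tsimerman's proof.
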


\begin{corollary}\label{cor_Greenberg}
Let $n, g$ be fixed positive integers. Consider the set $\mathcal{A}(n,g)$ all $g$-dimensional, geometrically simple abelian varieties $A/K$ of CM type, where $K$ is a number field of degree at most $n$. The set
\[
\mathcal{R}(n,g) = \left\{ \operatorname{End}_{K}(A) \otimes \mathbb{Q} \bigm\vert A \in \mathcal{A}(n,g) \right\}
\]
is finite.
\end{corollary}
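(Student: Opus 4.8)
The plan is to bound, in terms of $n$ and $g$ alone, the \emph{geometric} endomorphism algebras that can occur. Fix $A/K$ in $\mathcal{A}(n,g)$ and put $E=\operatorname{End}_{\overline{K}}(A)\otimes\mathbb{Q}$. Since $A$ is geometrically simple of CM type, $E$ is a CM field with $[E:\mathbb{Q}]=2g$ (for a simple abelian variety of CM type the full endomorphism algebra is a CM field of degree twice the dimension), and $\operatorname{End}_K(A)\otimes\mathbb{Q}$ is one of its subfields. A number field of degree $2g$ has only finitely many subfields, so it suffices to show that the set of fields $E$ occurring in this way, as $A$ ranges over $\mathcal{A}(n,g)$, is finite. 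I would obtain this from Theorem~\ref{thm_Tsimerman} as follows: produce, for each such $A$, an abelian variety $A'$ over $\overline{\mathbb{Q}}$ that is isogenous to $A_{\overline{\mathbb{Q}}}$, has $\operatorname{End}_{\overline{\mathbb{Q}}}(A')=\mathcal{O}_E$, and admits a model over a number field of degree at most some $c(n,g)$; then Tsimerman's lower bound applied to $A'$ gives $k_g\,|\operatorname{disc}(E)|^{\delta_g}\le c(n,g)$, and by the Hermite--Minkowski finiteness theorem only finitely many number fields of degree $2g$ satisfy such a bound.

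The delicate point -- and, I expect, the crux of the argument -- is that $R:=\operatorname{End}_{\overline{K}}(A)$ need not be the maximal order $\mathcal{O}_E$, whereas Theorem~\ref{thm_Tsimerman} applies only in that case. To get around this I would first replace $K$ by a number field $L\supseteq K$ over which all geometric endomorphisms of $A$ are defined; as recalled in the proof of Lemma~\ref{lemma_reduction1} one may take $[L:K]\le D(g)$, so that $[L:\mathbb{Q}]\le D(g)n$ and $\operatorname{End}_L(A_L)=R$. One can then run over $L$ the standard construction replacing a CM abelian variety by an isogenous one with maximal endomorphism order: if $N$ denotes the exponent of the finite group $\mathcal{O}_E/R$ (so that $N\mathcal{O}_E\subseteq R$), take $A'=A_L/C$ with $C\subseteq A_L[N]$ the subgroup corresponding, under an $R$-module isomorphism $A_L[N]\simeq\Lambda/N\Lambda$ ($\Lambda$ a period lattice for a fixed complex embedding), to the image of the $R$-submodule $(N\mathcal{O}_E)\Lambda$ of $\Lambda$. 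Because this submodule is $R$-stable and the $R$-action on $A_L$ is defined over $L$, the subgroup $C$ is stable under $\operatorname{Gal}(\overline{L}/L)$, so $A'$ is an abelian variety over $L$, isogenous to $A_L$. By construction $\mathcal{O}_E$ acts on $A'$, and since $A'$ is isogenous to the geometrically simple $A$ (whence $\operatorname{End}_{\overline{L}}(A')\otimes\mathbb{Q}=E$), the ring $\operatorname{End}_{\overline{L}}(A')$ is an order in $E$ containing $\mathcal{O}_E$, hence equal to $\mathcal{O}_E$.

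It only remains to collect the pieces. The abelian variety $(A')_{\overline{\mathbb{Q}}}$ has dimension $g$, endomorphism ring $\mathcal{O}_E$, and a model over $L$, so its field of moduli $F$ satisfies $[F:\mathbb{Q}]\le[L:\mathbb{Q}]\le D(g)n$. Theorem~\ref{thm_Tsimerman} then yields $k_g\,|\operatorname{disc}(E)|^{\delta_g}\le D(g)n$, so that $|\operatorname{disc}(E)|$ is bounded in terms of $n$ and $g$ only; combined with $[E:\mathbb{Q}]=2g$ and Hermite--Minkowski, this shows that only finitely many fields $E$ arise from the abelian varieties in $\mathcal{A}(n,g)$. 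Since each such $E$ has only finitely many subfields, the set $\mathcal{R}(n,g)$ is finite. Apart from the passage to the maximal order described above, the argument is a formal combination of Tsimerman's theorem, Hermite--Minkowski, and elementary field theory, and I do not anticipate further difficulties there.
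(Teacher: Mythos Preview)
Your proof is correct and follows essentially the same strategy as the paper: bound $|\operatorname{disc}(E)|$ via Tsimerman's theorem applied to an isogenous variety with $\mathcal{O}_E$-multiplication, then invoke finiteness of number fields of bounded degree and discriminant, and finally pass to subfields. The only substantive difference is that the paper asserts (as ``well-known'') that $A$ is already $K$-isogenous to an $A'/K$ with $\mathcal{O}_E$-multiplication and hence gets the bound $n \geq [F:\mathbb{Q}]$ directly, whereas you first enlarge the base to a field $L$ with $[L:K]\le D(g)$ over which all endomorphisms are defined, obtaining the slightly weaker bound $D(g)n \geq [F:\mathbb{Q}]$; this extra step is harmless for the finiteness conclusion and makes the Galois-stability of the kernel $C$ transparent, at the cost of a worse implicit constant.
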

\begin{proof}
Consider an abelian variety $A \in \mathcal{A}(n,g)$ with field of definition $K$, and let $E$ denote $\operatorname{End}_{\overline{K}}(A) \otimes_{\mathbb{Z}} \mathbb{Q}$; as it is well-known, $A$ is $K$-isogenous to an abelian variety $A'/K$ with multiplication by the full ring of integers of $E$. Let $F$ be the field of moduli of $A'$. Clearly we have $[K:\mathbb{Q}] \geq [F:\mathbb{Q}]$, and applying the previous theorem we find
\[
n \geq [K:\mathbb{Q}] \geq [F:\mathbb{Q}] \geq k_g |\operatorname{disc}(E)|^{\delta_g};
\]
in particular, $\operatorname{disc}(E)$ is bounded (in absolute value), hence there are only finitely many possibilities for $\operatorname{End}_{\overline{K}}(A') \otimes \mathbb{Q}=\operatorname{End}_{\overline{K}}(A) \otimes \mathbb{Q}$. As $\operatorname{End}_K(A) \otimes \mathbb{Q}$ is a subfield of $\operatorname{End}_{\overline{K}}(A) \otimes \mathbb{Q}$, this finishes the proof.
\end{proof}
\begin{remark}\label{rmk_EC}
The case $g=1$ (that is, the case of elliptic curves) of this corollary is well known, and is also a key ingredient for the arguments of \cite{zbMATH06443603}. To see why the case $g=1$ follows from the classical theory of elliptic curves, consider all number fields $K$ of degree at most $n$, and all elliptic curves $E_1/K$ with (potential) complex multiplication. If $E_1/K$ is such an elliptic curve, with complex multiplication by an order $R$ in the quadratic imaginary field $F$, then the action of $F$ on $E_1$ is defined over the compositum $FK$, and we can find an elliptic curve $E_2/FK$, isogenous to $E_1$ over $FK$, that has full complex multiplication by the ring of integers of $F$. Now it is well-known that the $j$-invariant of $E_2$ generates the Hilbert class field $H$ of $F$, and on the other hand $j(E_2)$ is in $FK$ by assumption, so it follows that
\[
h(F)=[H:F] \leq [FK:\mathbb{Q}] \leq 2[K:\mathbb{Q}] \leq 2n
\]
is bounded by $n$ alone. The Brauer-Siegel theorem implies that there are only finitely many imaginary quadratic fields $F$ with $h(F) \leq 2n$, and the finiteness of $\mathcal{R}(n,1)$ follows.
\end{remark}

\section{Conclusion}\label{sect_Conclusion}
We are now ready to prove theorem \ref{thm_RT}:
\begin{theorem}
There exists a function $C(n,g)$ such that $\operatorname{RT}^{\operatorname{CM}}(K,g,\ell)$ is empty for all number fields $K$ of degree at most $n$ and all primes $\ell > C(n,g)$.
\end{theorem}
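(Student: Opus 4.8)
The plan is to assemble the three ingredients proved above: the reduction Lemmas \ref{lemma_reduction1} and \ref{lemma_reduction2}, the division-field lower bound of Proposition \ref{prop_Key}, and the finiteness of endomorphism algebras provided by Corollary \ref{cor_Greenberg}. By Lemma \ref{lemma_reduction2} (applied together with Lemma \ref{lemma_reduction1}), it suffices to produce a function $\CAll(n,g)$ such that $\RTAll(K,g,\ell)$ is empty for every number field $K$ of degree at most $n$ and every prime $\ell > \CAll(n,g)$; the function $C(n,g)$ claimed in theorem \ref{thm_RT} is then, in the notation of the two reduction lemmas, $C(n,g)=\CEndo(D(g)n,g)=\max_{g'\leq g}\CAll(D(g)n,g')$.

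So fix a number field $K$ with $[K:\mathbb{Q}]\leq n$ and suppose, for contradiction, that $\RTAll(K,g,\ell)$ contains an abelian variety $A$. Write $R=\operatorname{End}_{\overline{K}}(A)=\operatorname{End}_K(A)$ and $E=R\otimes\mathbb{Q}$. Since $A$ is absolutely simple and of CM type, $E$ is a CM field of degree $2g$, and $A$ lies in the set $\mathcal{A}(n,g)$ of Corollary \ref{cor_Greenberg}. First I would invoke that corollary: as $A$ ranges over $\mathcal{A}(n,g)$, the algebra $E$ lies in a finite set $\mathcal{R}(n,g)$ depending only on $n$ and $g$. (Strictly speaking the corollary controls $\operatorname{End}_K(A)\otimes\mathbb{Q}$, but this equals $E$ here, and in any case its proof bounds $|\operatorname{disc}(E)|$ directly via Tsimerman's theorem, so the geometric endomorphism algebra is controlled without appealing to $\operatorname{End}_{\overline{K}}(A)=\operatorname{End}_K(A)$.) Consequently the set of primes dividing $\operatorname{disc}(F)$ for some $F\in\mathcal{R}(n,g)$ is finite; let $M(n,g)$ denote its maximum, with the convention $M(n,g)=1$ if this set is empty.

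Next I would apply Proposition \ref{prop_Key} to $A$: either $\ell\leq [K:\mathbb{Q}](g+2)^{3(g+1)}\leq n(g+2)^{3(g+1)}$, or $\ell$ divides $\operatorname{disc}(E)$, in which case $\ell\leq M(n,g)$ by the previous paragraph. In both cases
\[
\ell \leq \max\left\{\, n(g+2)^{3(g+1)},\ M(n,g)\,\right\} =: \CAll(n,g),
\]
contradicting $\ell>\CAll(n,g)$. Hence $\RTAll(K,g,\ell)$ is empty for all $\ell>\CAll(n,g)$, and theorem \ref{thm_RT} follows from Lemma \ref{lemma_reduction2}.

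I do not expect a genuine obstacle at this stage: the substance of the argument is already packaged in Proposition \ref{prop_Key} and Corollary \ref{cor_Greenberg}. The only point needing a moment's care is that the endomorphism algebra appearing in Proposition \ref{prop_Key}, namely $\operatorname{End}_{\overline{K}}(A)\otimes\mathbb{Q}$, is the same one controlled by Corollary \ref{cor_Greenberg}; this is immediate because every element of $\RTAll$ has all its endomorphisms defined over the base field, and, as noted above, the proof of the corollary in fact bounds the discriminant of the geometric endomorphism algebra regardless.
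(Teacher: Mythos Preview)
Your proposal is correct and follows essentially the same route as the paper: reduce to $\RTAll$ via Lemma~\ref{lemma_reduction2}, use Corollary~\ref{cor_Greenberg} to bound the possible endomorphism algebras, and then apply Proposition~\ref{prop_Key} to conclude. The only cosmetic difference is that the paper takes $\CAll(n,g)=\max\{\Delta,\,n(g+2)^{3(g+1)}\}$ with $\Delta=\max_{E\in\mathcal{R}(n,g)}|\operatorname{disc}(E)|$, whereas you use the (slightly smaller) maximum prime divisor $M(n,g)$; either choice works.
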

\begin{proof}
According to lemma \ref{lemma_reduction2}, it suffices to show the existence of a function $\CAll(n,g)$ such that $\RTAll(K,g,\ell)$ is empty for all number fields $K$ of degree at most $n$ and for all $\ell>\CAll(n,g)$. Consider the set $\mathcal{R}(n,g)$ of corollary \ref{cor_Greenberg}, and let $\Delta$ be the maximum of the discriminants $|\operatorname{disc}(E)|$ for $E$ varying in $\mathcal{R}(n,g)$.

We claim that we can take 
$
\CAll(n,g)=\max\left\{\Delta, n(g+2)^{3(g+1)}\right\}.
$ 
To see this, consider a number field $K$ of degree at most $n$ and an element $A/K$ of $\RTAll\left(K,g,\ell \right)$, and set $E=\operatorname{End}_K(A) \otimes \mathbb{Q}$. By proposition \ref{prop_Key}, we have either $\ell \leq n(g+2)^{3(g+1)} \leq \CAll(n,g)$ or $\ell \leq |\operatorname{disc}(E)| \leq \Delta \leq \CAll(n,g)$; in particular, $\RTAll(K,g,\ell)$ is empty for $\ell > \CAll(n,g)$ as claimed.
\end{proof}

\section{Some remarks on effectivity}\label{sect_6}
Unlike theorem \ref{thm_Abbey}, our theorem \ref{thm_RT} is unfortunately non-effective: the source of this can be traced back to the proof of theorem \ref{thm_Tsimerman}, and more specifically to Corollary 3.2 of \cite{2015arXiv150601466T}, whose proof depends on the full strength of the Brauer-Siegel theorem, which is not known to be effective at present. Notice that other parts of Tsimerman's argument also require the Brauer-Siegel theorem, but they can be made effective by using the results of \cite{MR0342472}, so \cite[Corollary 3.2]{2015arXiv150601466T} is really the crux of the matter. By contrast, notice that the proof of the case $g=1$ of corollary \ref{cor_Greenberg} sketched in remark \ref{rmk_EC} \textit{can} be made effective: as it is well known, the problem of determining all imaginary quadratic fields of a given class number can be solved effectively. This fact is exploited in \cite{zbMATH06443603} to produce explicit bounds for the function $C(n,1)$ for various values of $n$.

On the other hand, even if one is willing to assume the truth of the Generalized Riemann Hypothesis (which -- as it is well known -- implies effective versions of the Brauer-Siegel theorem), the argument of \cite{2015arXiv150601466T} gives for the constant $\delta_g$ of theorem \ref{thm_Tsimerman} a very small value, intimately tied to a certain exponent appearing in the so-called Isogeny Theorem of Masser and W\"ustholz \cite{MR1207211} \cite{MR1217345}; the Brauer-Siegel theorem is only used to determine the value of $k_g$, and has no influence on $\delta_g$. 
Using the (currently) best available isogeny bound, due to Gaudron and Rémond \cite{PolarisationsEtIsogenies}, we see for example that theorem \ref{thm_Tsimerman} holds for all values of $\delta_2$ strictly smaller than $2^{-16}$: clearly this number is so small that it makes it impossible in practice to use theorem \ref{thm_Tsimerman} to determine the set $\mathcal{R}(n,g)$. Conditionally on GRH, sharper results are known, but none of them seems to be completely explicit at present: in the context of giving lower bounds on Galois orbits of special points on Shimura varieties, Tsimerman, Ullmo and Yafaev have proven various lower bounds on the degree of the field of moduli of a CM abelian variety (cf.~for example \cite{MR3323347} and \cite{MR2947946}), but their results contain some non-explicit constants that seem hard to compute in practice.

Slightly different techniques -- mainly coming from classical analytic number theory -- can however yield results on the sets $\mathcal{R}(n,g)$ for certain small values of $g$ and $n$, which in turn allows us to determine an admissible value for $C(n,g)$ -- and sometimes even the optimal value -- via the argument described in the previous sections. For example, we can prove

\begin{proposition}
We can take $C(1,2)=163$, and this value is optimal.
\end{proposition}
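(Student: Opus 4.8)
The plan is to bound $\ell$ by analysing a CM abelian surface $A/\mathbb{Q}$ according to its geometric isogeny type, and then to produce an explicit example realising $\ell=163$; throughout one may, after a $\mathbb{Q}$-isogeny (harmless at the primes $\ell$ that matter), assume $\operatorname{End}_{\overline{\mathbb{Q}}}(A)$ is a maximal order.

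\emph{The geometrically simple case.} Here $E:=\operatorname{End}_{\overline{\mathbb{Q}}}(A)\otimes\mathbb{Q}$ is a quartic CM field carrying a primitive CM type, hence either cyclic or non-Galois over $\mathbb{Q}$; in both cases its reflex field $E^{*}$ has degree $4$. Let $L$ be the (Galois) extension of $\mathbb{Q}$ over which the CM structure becomes defined: then $\operatorname{Gal}(L/\mathbb{Q})$ embeds into $\operatorname{Aut}(E/\mathbb{Q})$, while $E^{*}\subseteq L$ by the main theorem of complex multiplication. Since $\operatorname{Aut}(E/\mathbb{Q})$ has order $2$ in the non-Galois case, this is only possible if $E$ is cyclic and $L=E^{*}=E$. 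Thus $A_{E}\in\RTAll(E,2,\ell)$, and I would run the argument of proposition \ref{prop_Key} over $K=E$: if $\ell\nmid\operatorname{disc}(E)$ then $E(A[\ell])$ is an extension of $E$ of degree prime to $\ell$ contained in the pro-$\ell$ tower $E(A[\ell^{\infty}])/E(\mu_{\ell})$, so $E(A[\ell])=E(\mu_{\ell})$ and $\ell-1=[E(\mu_{\ell}):E]$ is bounded below by $(\ell-1)^{r}/(\mu|F|^{2r})$; as a cyclic quartic CM field has Mumford--Tate rank $r=3$, at most $10$ roots of unity, and $|F|\le 2$ (the last by \cite[Thm.~1.3]{2015arXiv150604734L}), this forces $\ell-1\le\sqrt{640}$, i.e.\ $\ell\le 23$. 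If instead $\ell\mid\operatorname{disc}(E)$ one uses that the field of moduli of $A$ is $\mathbb{Q}$: this bounds $\operatorname{disc}(E)$ --- here one wants the classical (and effective) determination of the quartic CM fields admitting a $\mathbb{Q}$-rational CM point rather than theorem \ref{thm_Tsimerman} itself --- and one checks directly that every prime dividing any such discriminant is $\le 163$ (in fact far smaller).

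\emph{The geometrically non-simple case.} Now $A$ is geometrically isogenous to a product of two CM elliptic curves, and a short descent argument shows that, up to $\mathbb{Q}$-isogeny, either $A\sim A_{1}\times A_{2}$ with $A_{1},A_{2}/\mathbb{Q}$ elliptic curves --- in which case the pro-$\ell$ hypothesis passes to each factor, so $A_{i}\in\operatorname{RT}^{\operatorname{CM}}(\mathbb{Q},1,\ell)$ and $\ell\le 163$ by theorem \ref{thm_Abbey} together with the explicit bound for $n=1$ of \cite{zbMATH06443603} --- or $A\sim\operatorname{Res}_{F/\mathbb{Q}}E'$ for a quadratic field $F$ and a CM elliptic curve $E'/F$, whence $\mathbb{Q}(A[\ell^{\infty}])=F(E'[\ell^{\infty}])$. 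In the latter case the pro-$\ell$ condition over $\mathbb{Q}(\mu_{\ell})$ forces both $F$ and the imaginary quadratic CM field $F_{1}$ of $E'$ to lie in $\mathbb{Q}(\mu_{\ell})$, hence $F_{1}=F=\mathbb{Q}(\sqrt{-\ell})$ with $\ell\equiv 3\pmod 4$; then $j(E')\in F=F_{1}$ forces the CM order to have class number $1$, and a standard analysis of the image of Galois in the normalizer of a Cartan subgroup shows that $\ell$ must ramify in $F_{1}$, so $\operatorname{disc}(F_{1})=-\ell$. By the Baker--Heegner--Stark solution of the class number one problem, $\ell\in\{3,7,11,19,43,67,163\}$ (the non-maximal orders of conductor $>1$ contributing only $\ell\in\{3,7\}$). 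Hence $\ell\le 163$ in every case.

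\emph{Optimality and the main obstacle.} Let $E_{0}/\mathbb{Q}$ have complex multiplication by the maximal order of $F:=\mathbb{Q}(\sqrt{-163})$ (such curves exist because $h(F)=1$) and put $A=E_{0}^{2}$; this is a CM abelian surface over $\mathbb{Q}$ with $\mathbb{Q}(A[163^{\infty}])=\mathbb{Q}(E_{0}[163^{\infty}])$. Writing $(163)=\mathfrak{p}^{2}$ in $\mathcal{O}_{F}$, the CM becomes defined over $F$ and $\operatorname{Gal}\big(\mathbb{Q}(E_{0}[163^{\infty}])/F\big)$ is, by the main theorem of complex multiplication, isomorphic to $\mathcal{O}_{F,\mathfrak{p}}^{\times}/\{\pm 1\}$, whose prime-to-$163$ part is cyclic of order $81$; since $F=\mathbb{Q}(\sqrt{-163})$ is exactly the quadratic subfield of $\mathbb{Q}(\mu_{163})$, this cyclic quotient is $\operatorname{Gal}(\mathbb{Q}(\mu_{163})/F)$, so $\mathbb{Q}(E_{0}[163^{\infty}])/\mathbb{Q}(\mu_{163})$ is pro-$163$. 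Therefore $A\in\operatorname{RT}^{\operatorname{CM}}(\mathbb{Q},2,163)$, which shows that $163$ cannot be lowered. The main obstacle is the non-simple case, where one must keep careful track of fields of definition --- in particular forcing the CM field $F_{1}$ into $\mathbb{Q}(\mu_{\ell})$ --- so as to reduce everything to the class number one problem; in the simple case the delicate point is instead the effective classical input needed to control $\operatorname{disc}(E)$, so as to be sure that no prime above $163$ slips through.
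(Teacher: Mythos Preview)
Your overall strategy matches the paper's: split into the geometrically simple and non-simple cases, and in each case bound $\ell$ by a mixture of the lower bound on $[K(A[\ell]):K]$ from \cite{2015arXiv150604734L} and explicit classification results. The optimality argument via $A=E_0^2$ is fine (the paper simply cites the known value $C(1,1)=163$).

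In the simple case you do a little more than the paper: you argue first that $E$ must be cyclic quartic (via $\operatorname{Gal}(L/\mathbb{Q})\hookrightarrow\operatorname{Aut}(E)$ together with $E^*\subseteq L$), whereas the paper works directly over $E^*$ without this preliminary step. Your numerical bound in the unramified sub-case is slightly weaker because you take $|F|\le 2$ where the paper uses $|F|=1$ (valid for all absolutely simple CM surfaces), but both give a bound far below $163$. For the ramified sub-case you invoke, as the paper does, the explicit determination of quartic CM fields with a $\mathbb{Q}$-rational CM point; the paper makes this precise by citing \cite{MR2430994}, which gives the bound $61$.

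The genuine gap is in the non-simple case. Your ``short descent argument'' asserting that $A$ is $\mathbb{Q}$-isogenous to either a product of $\mathbb{Q}$-elliptic curves or a Weil restriction $\operatorname{Res}_{F/\mathbb{Q}}E'$ is not a generality: a $\mathbb{Q}$-simple surface with quaternionic multiplication over $\mathbb{Q}$ and $A_{\overline{\mathbb{Q}}}\sim E^2$ would satisfy neither alternative, since $\operatorname{End}_{\mathbb{Q}}(\operatorname{Res}_{F/\mathbb{Q}}E')\otimes\mathbb{Q}$ is always $\mathbb{Q}$ or an imaginary quadratic field, never a quaternion algebra. Ruling out such QM surfaces over $\mathbb{Q}$ (and, more generally, establishing your dichotomy) requires exactly the Fit\'e--Kedlaya--Rotger--Sutherland classification \cite{MR2982436} that the paper invokes. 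The paper's route is different: it uses that classification to show the endomorphism field $K$ is forced (by the pro-$\ell$ hypothesis) to lie in $\mathbb{Q}(\mu_\ell)$, hence to be cyclic, then runs through the surviving Galois types to produce a \emph{quadratic} extension $M/\mathbb{Q}$ over which $A_M$ acquires an elliptic-curve factor, and finally applies $C(2,1)=163$ from \cite{zbMATH06443603}. Your reduction to the class-number-one problem is elegant once the dichotomy is granted, but the dichotomy itself is where the real work lies, and it needs the same structural input the paper uses.
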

\begin{proof}
It is clear by definition that we must have $C(1,2) \geq C(1,1) \geq 163$, where the optimal value of $C(1,1)=163$ is taken from \cite{MR2470396} (see also \cite{zbMATH06443603}). 
Consider now an abelian surface $A/\mathbb{Q}$ admitting potential complex multiplication, and suppose first that $A_{\overline{\mathbb{Q}}}$ is isogenous to the product of two elliptic curves. Let $\ell$ be a prime larger than 163: we claim that $\mathbb{Q}(A[\ell])/\mathbb{Q}(\mu_\ell)$ cannot be pro-$\ell$. Suppose the contrary: we shall obtain a contradiction. We shall need to rely on the results of \cite{MR2982436}, so we take the notation of that paper for the ``Galois type'' of our abelian variety $A$.
Let $K$ be a minimal field of definition for the endomorphisms of $A$; by \cite{MR2982436}, we have $[K:\mathbb{Q}] \bigm\vert 48$, and $K$ is contained in $\mathbb{Q}(A[\ell])$ by \cite[Propositions 2.2 and 2.3]{MR1154704}. In fact we know even more, namely that $K/\mathbb{Q}$ is normal, with Galois group isomorphic to a subgroup of either $S_4 \times \mathbb{Z}/2\mathbb{Z}$ or $D_6 \times \mathbb{Z}/2\mathbb{Z}$ (\cite[Table 8]{MR2982436}).
Consider now the following diagram of field extensions:

\[
\xymatrix{
& \mathbb{Q}(A[\ell]) \ar@{-}[ld]_{\text{pro-}\ell} \ar@{-}[rd] \\
\mathbb{Q}(\mu_\ell) \ar@{-}[rd]
&  
&K \ar@{-}[ld]^{[K:\mathbb{Q}] \bigm\vert 48}\\
&\mathbb{Q}}
\]

Let $G_K$ (resp.~$G_{\mathbb{Q}(\mu_\ell)}$, $G_\mathbb{Q}$) be the Galois group of $\mathbb{Q}(A[\ell])$ over $K$ (resp.~$\mathbb{Q}(\mu_\ell)$, $\mathbb{Q}$). Then $[K:\mathbb{Q}]=[G_\mathbb{Q}:G_K]$ is prime to $\ell$, hence $G_K$ contains a maximal $\ell$-Sylow subgroup of $G_\mathbb{Q}$. On the other hand, $G_{\mathbb{Q}(\mu_\ell)}$ is a maximal Sylow subgroup of $G_\mathbb{Q}$ (notice that $\ell \nmid [\mathbb{Q}(\mu_\ell):\mathbb{Q}])$, and it is normal in $G_\mathbb{Q}$ because $\mathbb{Q}(\mu_\ell)$ is Galois over $\mathbb{Q}$. Now since all the maximal $\ell$-Sylow subgroups of a group are conjugate to each other, this proves that $G_\mathbb{Q}$ has a unique maximal $\ell$-Sylow, namely $G_{\mathbb{Q}(\mu_\ell)}$. It follows that $G_K$ contains $G_{\mathbb{Q}(\mu_\ell)}$, hence that $K$ is contained in $\mathbb{Q}(\mu_\ell)$. In particular, $K/\mathbb{Q}$ is a cyclic extension, and since its Galois group is a subgroup of either $S_4 \times \mathbb{Z}/2\mathbb{Z}$ or $D_6 \times \mathbb{Z}/2\mathbb{Z}$ the group $\operatorname{Gal}(K/\mathbb{Q})$ must be cyclic of order $1$, $2$, $3$, $4$ or $6$. 
Depending on whether the simple factors of $A_{\overline{\mathbb{Q}}}$ are isogenous or not, the following are then the only possibilities for the Galois type of $A$:
\begin{enumerate}
\item $A_{\overline{\mathbb{Q}}}$ is isogenous to the square of an elliptic curve: then by what we have just proved, combined with \cite[Table 8]{MR2982436}, the Galois type of $A$ is one of $\mathbf{F}[C_n]$ ($n \in \{1,2,3,4,6\}$), $\mathbf{F}[C_2,C_1,\mathbb{H}]$, $\mathbf{F}[C_2,C_1,\operatorname{M}_2(\mathbb{R})]$, $\mathbf{F}[C_4,C_2]$, $\mathbf{F}[C_6,C_3,\operatorname{M}_2(\mathbb{R})]$, $\mathbf{F}[C_6,C_3,\mathbb{H}]$;
\item the two elliptic curves appearing as simple factors of $A_{\overline{\mathbb{Q}}}$ are non-isogenous: then the Galois type of $A$ is one of $\mathbf{D}[C_1]$, $\mathbf{D}[C_2,\mathbb{R}\times\mathbb{C}]$, $\mathbf{D}[C_2,\mathbb{R}\times\mathbb{R}]$, $\mathbf{D}[C_4]$.
\end{enumerate}

We claim that there exists a quadratic extension $M$ of $\mathbb{Q}$ such that $A_M$ admits a 1-dimensional abelian subvariety defined over $M$ (equivalently, $A_M$ is $M$-isogenous to the product of two elliptic curves defined over $M$).

Case (2) is easy to deal with: according to \cite[Theorem 4.3]{MR2982436}, only type $\mathbf{D}[C_4]$ can arise for an abelian surface $A$ defined over $\mathbb{Q}$, and in this case $A_{\overline{\mathbb{Q}}}$ is simple (\cite[§4.3 and 4.4]{MR2982436}), contrary to our assumptions. We can therefore focus on case (1). 

Let us first notice that, among the various subcases of (1), only cases $\mathbf{F}[C_2,C_1,\operatorname{M}_2(\mathbb{R})]$ and $\mathbf{F}[C_6,C_3,\operatorname{M}_2(\mathbb{R})]$ can arise for $A$ defined over $\mathbb{Q}$ (\cite[Theorem 4.3]{MR2982436}). As for these two Galois types, the claim about the existence of $M$ is obvious for $\mathbf{F}[C_2,C_1,\operatorname{M}_2(\mathbb{R})]$, because in this case $K$ is itself a quadratic extension of $\mathbb{Q}$, and since all the endomorphisms of $A$ are defined over $K$, so are its abelian subvarieties. For case $\mathbf{F}[C_6,C_3,\operatorname{M}_2(\mathbb{R})]$, we know by \cite[§4.5.2]{MR2982436} that $\operatorname{End}_{K}(A) \otimes \mathbb{R} \cong \operatorname{M}_2(\mathbb{C})$, and the action of $\operatorname{Gal}\left(K/\mathbb{Q}\right) \cong \mathbb{Z}/6\mathbb{Z}$ on it is determined by the fact that there is a generator $g$ of $\mathbb{Z}/6\mathbb{Z}$ that acts on $2 \times 2$ complex matrices by the formula
$
\displaystyle
\left(\begin{matrix} a & b \\ c & d \end{matrix} \right) \mapsto \left(\begin{matrix} \bar{d} & \zeta_3 \bar{c} \\ \overline{\zeta_3} \bar{b} & \bar{a} \end{matrix} \right).
$
It follows that $g^2$ acts as 
$
\left(\begin{matrix} a & b \\ c & d \end{matrix} \right) \mapsto \left(\begin{matrix} a & \zeta_3^2 b \\ \overline{\zeta_3}^2 c & d \end{matrix} \right),
$
so the fixed ring of $g^2$ is isomorphic to $\mathbb{C}\times \mathbb{C}$ (matrices with $b=c=0$). If we denote by $M$ the fixed field of $g^2$, then $M/\mathbb{Q}$ is a quadratic extension, and $\operatorname{End}(A_M) \otimes \mathbb{R} \cong \operatorname{M}_2(\mathbb{C})^{\langle g^2 \rangle}=\mathbb{C}\times \mathbb{C}$. Since by assumption $\operatorname{End}_M(A)$ cannot be a number field of degree 4, it follows that $\operatorname{End}_M(A)$ is not an integral domain, hence that $A_M$ is nonsimple as claimed.

Let now $A_1/M$ be an elliptic curve contained in $A_M$: the extension $M(A_1[\ell])/M(\mu_\ell)$, being contained in $M(A[\ell])/M(\mu_\ell)$, is pro-$\ell$, but by definition of $C(2,1)$ this is impossible for $\ell>C(2,1)=163$, which finishes the proof in this case.

Consider then the case of $A$ being geometrically simple. According to \cite[Theorems 2.1 and 2.2]{MR2430994} (see also \cite{MR1370204}, \cite{MR1901990}, and \cite{MR1807666}), if $p$ is a prime ramified in $E:=\operatorname{End}_{\overline{\mathbb{Q}}}(A) \otimes \mathbb{Q}$, then $p \leq 61$. We claim that $\mathbb{Q}(A[\ell])/\mathbb{Q}(\mu_\ell)$ cannot be pro-$\ell$ for any prime $\ell > 61$. Indeed, let $\ell>61$ be a prime, and let $(E^*,\Phi^*)$ be the reflex type of $(E,\Phi)$, where $(E,\Phi)$ is the CM type attached to $A$. It is well-known that all endomorphisms of $A$ are defined over $E^*$ (cf.~\cite[Chapter 3, Theorem 1.1]{MR713612}), and clearly if the extension $\mathbb{Q}(A[\ell])/\mathbb{Q}(\mu_\ell)$ is pro-$\ell$ then the same is true for $E^*(A[\ell])/E^*(\mu_\ell)$. As in the proof of proposition \ref{prop_Key}, since all the endomorphisms of $A$ are defined over $E^*$ we know that the representation $\abGal{E^*} \to \operatorname{Aut} A[\ell]$ factors through $(\mathcal{O}_E \otimes \mathbb{F}_\ell)^\times$, which is a group of order prime to $\ell$ since $\ell$ is unramified in $E$. It follows that $E^*(A[\ell])=E^*(\mu_\ell)$, hence $[E^*(A[\ell]):E^*] \leq \ell-1$. 

Observe now that (in the notation of the proof of proposition \ref{prop_Key}) we have $|F|=1$ and $r=3$, because this is true for all absolutely simple CM abelian surfaces; we then obtain from \cite[Theorems 1.2 and 1.3]{2015arXiv150604734L} the inequality $[E^*(A[\ell]):E^*] \geq \frac{1}{|\mu(E)|} (\ell-1)^3$, where $\mu(E)$ is the group of roots of unity in $E$. Since $[E:\mathbb{Q}] =4$, it is easy to see that $|\mu(E)| \leq 12$, whence
\[
\ell -1 \geq [E^*(\mu_\ell):E^*] = [E^*(A[\ell]):E^*] \geq \frac{1}{12} (\ell-1)^3,
\]
i.e.~$\ell \leq 3$, a contradiction.
\end{proof}

\begin{remark}
It is interesting to notice that if we only consider \textit{absolutely simple} abelian surfaces over $\mathbb{Q}$, then the value $61$ obtained in the course of the previous proof is optimal, as the following example shows. We know from \cite{MR2430994} that there exists an absolutely simple abelian surface $A/\mathbb{Q}$, with good reduction everywhere except at $61$, which admits (potential) complex multiplication by the full ring of integers of $K=\mathbb{Q}\left( \sqrt{-(61+6\sqrt{61})} \right)$.

The discriminant of $K$ is $61^3$, so $K$ is ramified at 61 only, and we have $(61)\mathcal{O}_K = \mathfrak{P}^4$ for a certain prime $\mathfrak{P}$ of $\mathcal{O}_K$. The extension $K/\mathbb{Q}$ is cyclic of degree 4, so -- since it is furthermore unramified outside 61 -- we see by the Kronecker-Weber theorem that it is a sub-extension of $\mathbb{Q}(\mu_{61})/\mathbb{Q}$. Writing $\operatorname{Gal}(K/\mathbb{Q})=\{\operatorname{Id},\sigma,\sigma^2,\sigma^3\}$, the CM type of $A/\mathbb{Q}$ is $\{\operatorname{Id},\sigma\}$, and the reflex norm is $\Phi(x)=x \cdot \sigma^3(x)$. Recall that the reflex norm induces a group morphism $I_K \to I_K$, where $I_K$ is the group of idèles of $K$, by acting on the idèles componentwise. As $K/\mathbb{Q}$ is cyclic, $K$ is its own reflex field, and as a consequence all the endomorphisms of $A$ are defined over $K$.
The class number of $K$ is 1, so if $\omega: I_K \to \operatorname{Gal}\left(K^{\text{ab}}/K\right)$ denotes the reciprocity map of global class field theory we see that $\omega\left( \prod_{v} \mathcal{O}_{K,v}^\times\right)$ is all of $\operatorname{Gal}\left(K^{\text{ab}}/K\right)$. 
Hence, in order to describe the Hecke character $\varepsilon$ attached to $A_K$ it suffices to describe its restriction to $\prod_{v} \mathcal{O}_{K,v}^\times$, and by the explicit construction of \cite[pp.~664 and 667]{MR2430994} we have
\[
\begin{array}{cccc}
\varepsilon :  & \prod_{v} \mathcal{O}_{K,v}^\times & \to & \{\pm 1 \} \\
 & (a_v) & \mapsto & \begin{cases}1, \text{ if }a_{\mathfrak{P}} \text{ is a square in } \mathbb{F}_\mathfrak{P}^\times \\ -1, \text{ otherwise} \end{cases}
\end{array}
\]
By \cite[Corollary 2 to Theorem 5]{MR0236190} we know that, since $\operatorname{End}_K(A)=\mathcal{O}_K$, the representation $\abGal{K} \to \operatorname{Aut} A[61]$ factors as
\[
\abGal{K} \to\abGal{K}^{\operatorname{ab}} \xrightarrow{\; \rho } \left(\mathcal{O}_K \otimes \mathbb{F}_{61}\right)^\times \hookrightarrow \operatorname{Aut} A[61],
\]
and the map $\rho$ can be described on idèle classes as
\[
\rho((a_v)) = \varepsilon((a_v)) \cdot \Phi(a_\mathfrak{P}).
\]
We claim that the image of $\abGal{K}\to \left(\mathcal{O}_K \otimes \mathbb{F}_{61}\right)^\times$ 
is contained in the kernel of the natural map 
$
\left(\mathcal{O}_K \otimes \mathbb{F}_{\mathfrak{P}}\right)^\times \to \mathbb{F}_{\mathfrak{P}}^\times \to \frac{ \mathbb{F}_{\mathfrak{P}}^\times }{ \mathbb{F}_{\mathfrak{P}}^{\times 4}}.
$
 Notice first that if $(a_v)$ is any idèle class, then $\rho((a_v))$ only depends on $a_{\mathfrak{P}}$. Thus to prove our claim it suffices to check that given an element $a_{\mathfrak{P}} \in \mathcal{O}_{K,\mathfrak{P}}^\times$ the product $\varepsilon(a_{\mathfrak{P}}) \Phi(a_{\mathfrak{P}})$ reduces to a fourth power in $\mathbb{F}_{\mathfrak{P}}^\times$. Notice furthermore that $\sigma \in \operatorname{Gal}\left(K/\mathbb{Q}\right)$ acts trivially on $\mathbb{Z}_{61} \subseteq \mathcal{O}_{K,\mathfrak{P}}$, so $\Phi(x)=x\sigma^3(x)$ induces the map $x \mapsto x^2$ on $\mathbb{F}_{\mathfrak{P}}^\times$.
We can now prove our claim. Suppose first that $a_{\mathfrak{P}}$ is a square in $\mathbb{F}_{\mathfrak{P}}^\times$: then we have $\varepsilon(a_{\mathfrak{P}})=1$, and $\varepsilon(a_{\mathfrak{P}}) \Phi(a_{\mathfrak{P}})$ reduces to $1 \cdot (a_{\mathfrak{P}})^2$ in $\mathbb{F}_{\mathfrak{P}}^\times$; since $a_{\mathfrak{P}}$ is a square in $\mathbb{F}_{\mathfrak{P}}^\times$, the product $\varepsilon(a_{\mathfrak{P}}) \Phi(a_{\mathfrak{P}})$ is a fourth power in $\mathbb{F}_{\mathfrak{P}}^\times$ as claimed. Suppose on the other hand that $a_{\mathfrak{P}}$ is not a square in $\mathbb{F}_{\mathfrak{P}}^\times$: then $a_{\mathfrak{P}}^2$ is a square but not a fourth power, and we have $\varepsilon(a_\mathfrak{P})=-1$, which again is a square but not a fourth power in $\mathbb{F}_{\mathfrak{P}}^\times \cong \mathbb{F}_{61}^\times$: the product $\varepsilon(a_\mathfrak{P})\Phi(a_{\mathfrak{P}})$ is then a fourth power in $\mathbb{F}_{\mathfrak{P}}^\times$ as claimed. 
   
Let $d=61^ka$ (with $(61,a)=1$) be the degree of the extension $K(A[61])/K$: by what we just showed, $a$ divides 
$\displaystyle
\left|\ker\left( (\mathcal{O}_K \otimes \mathbb{F}_{\mathfrak{P}})^\times \to \mathbb{F}_{\mathfrak{P}}^\times\bigm/\mathbb{F}_{\mathfrak{P}}^{\times 4} \right) \right|=\left|\mathbb{F}_{\mathfrak{P}}^{\times 4} \right| \times \left| \mathbb{F}_{\mathfrak{P}} \right|^3=15 \cdot 61^3
$, so $a \bigm| 15$.
Then since $[K(\mu_{61}):K] \geq \frac{1}{[K:\mathbb{Q}]} \varphi(61)=15$ and $K(\mu_{61})$ is contained in $K(A[61])$, we see that $[K(\mu_{61}):K]=15$ and $K(A[61])/K(\mu_{61})$ is a pro-61 extension. Finally, since $K$ is contained in $\mathbb{Q}(\mu_{61})$, we have $K(\mu_{61})=\mathbb{Q}(\mu_{61})$ and $K(A[61])=\mathbb{Q}(A[61])$, and therefore $\mathbb{Q}(A[61])/\mathbb{Q}(\mu_{61})$ is a pro-$61$ extension. This shows, as claimed, that the constant 61 is optimal for absolutely simple abelian surfaces with CM.
\end{remark}

As a final remark, we note that the computation of an explicit value for $C(2,2)$ might be within reach with the current state of knowledge on quartic CM fields, and there is work in progress related to the determination of the set $\mathcal{R}(2,2)$, see for example \cite{MR3376741} and \cite{2015arXiv151104869K}.

\medskip

\noindent\textbf{Acknowledgements.} \textit{I thank Abbey Bourdon for an interesting conversation that prompted me to look into this problem.}

\bibliographystyle{plain}
\bibliography{Bibliography}

\begin{thebibliography}{10}

\bibitem{MR960948}
G.~Anderson and Y.~Ihara.
\newblock Pro-{$\ell$} branched coverings of {${\bf P}^1$} and higher circular
  {$\ell$}-units.
\newblock {\em Ann. of Math. (2)}, 128(2):271--293, 1988.

\bibitem{zbMATH06443603}
A.~{Bourdon}.
\newblock {A uniform version of a finiteness conjecture for CM elliptic
  curves.}
\newblock {\em {Math. Res. Lett.}}, 22(2):403--416, 2015.

\bibitem{MR3376741}
F.~Bouyer and M.~Streng.
\newblock Examples of {CM} curves of genus two defined over the reflex field.
\newblock {\em LMS J. Comput. Math.}, 18(1):507--538, 2015.

\bibitem{MR2982436}
F.~Fit{\'e}, K.~S. Kedlaya, V.~Rotger, and A.~V. Sutherland.
\newblock Sato-{T}ate distributions and {G}alois endomorphism modules in genus
  2.
\newblock {\em Compos. Math.}, 148(5):1390--1442, 2012.

\bibitem{PolarisationsEtIsogenies}
{\'E}.~Gaudron and G.~R{\'e}mond.
\newblock Polarisations et isog\'enies.
\newblock {\em Duke Math. J.}, 163(11):2057--2108, 2014.

\bibitem{2015arXiv151104869K}
P.~{Kilicer} and M.~{Streng}.
\newblock {The CM class number one problem for curves of genus $2$}.
\newblock {\em ArXiv e-prints}, November 2015.

\bibitem{MR713612}
S.~Lang.
\newblock {\em Complex multiplication}, volume 255 of {\em Grundlehren der
  Mathematischen Wissenschaften [Fundamental Principles of Mathematical
  Sciences]}.
\newblock Springer-Verlag, New York, 1983.

\bibitem{2015arXiv150604734L}
D.~{Lombardo}.
\newblock {Galois representations attached to abelian varieties of CM type}.
\newblock {\em ArXiv e-prints}, June 2015.

\bibitem{MR1217345}
D.~W. Masser and G.~W{\"u}stholz.
\newblock Isogeny estimates for abelian varieties, and finiteness theorems.
\newblock {\em Ann. of Math. (2)}, 137(3):459--472, 1993.

\bibitem{MR1207211}
D.~W. Masser and G.~W{\"u}stholz.
\newblock Periods and minimal abelian subvarieties.
\newblock {\em Ann. of Math. (2)}, 137(2):407--458, 1993.

\bibitem{MR1370204}
N.~Murabayashi.
\newblock The field of moduli of abelian surfaces with complex multiplication.
\newblock {\em J. Reine Angew. Math.}, 470:1--26, 1996.

\bibitem{MR2430994}
N.~Murabayashi.
\newblock Determination of simple {CM} abelian surfaces defined over {$\Bbb
  Q$}.
\newblock {\em Math. Ann.}, 342(3):657--671, 2008.

\bibitem{MR1807666}
N.~Murabayashi and A.~Umegaki.
\newblock Determination of all {${\bf Q}$}-rational {CM}-points in the moduli
  space of principally polarized abelian surfaces.
\newblock {\em J. Algebra}, 235(1):267--274, 2001.

\bibitem{MR2470396}
C.~Rasmussen and A.~Tamagawa.
\newblock A finiteness conjecture on abelian varieties with constrained prime
  power torsion.
\newblock {\em Math. Res. Lett.}, 15(6):1223--1231, 2008.

\bibitem{2013arXiv1302.1477R}
C.~{Rasmussen} and A.~{Tamagawa}.
\newblock {Arithmetic of abelian varieties with constrained torsion}.
\newblock {\em ArXiv e-prints}, February 2013.

\bibitem{MR608640}
K.~A. Ribet.
\newblock Division fields of abelian varieties with complex multiplication.
\newblock {\em M\'em. Soc. Math. France (N.S.)}, (2):75--94, 1980/81.
\newblock Abelian functions and transcendental numbers (Colloq., {\'E}cole
  Polytech., Palaiseau, 1979).

\bibitem{MR0236190}
J.-P. Serre and J.~Tate.
\newblock Good reduction of abelian varieties.
\newblock {\em Ann. of Math. (2)}, 88:492--517, 1968.

\bibitem{MR1154704}
A.~Silverberg.
\newblock Fields of definition for homomorphisms of abelian varieties.
\newblock {\em J. Pure Appl. Algebra}, 77(3):253--262, 1992.

\bibitem{MR0342472}
H.~M. Stark.
\newblock Some effective cases of the {B}rauer-{S}iegel theorem.
\newblock {\em Invent. Math.}, 23:135--152, 1974.

\bibitem{MR2947946}
J.~Tsimerman.
\newblock Brauer-{S}iegel for arithmetic tori and lower bounds for {G}alois
  orbits of special points.
\newblock {\em J. Amer. Math. Soc.}, 25(4):1091--1117, 2012.

\bibitem{2015arXiv150601466T}
J.~{Tsimerman}.
\newblock {A proof of the Andre-Oort conjecture for $\mathcal{A}_g$}.
\newblock {\em ArXiv e-prints}, June 2015.

\bibitem{MR3323347}
E.~Ullmo and A.~Yafaev.
\newblock Nombre de classes des tores de multiplication complexe et bornes
  inf\'erieures pour les orbites galoisiennes de points sp\'eciaux.
\newblock {\em Bull. Soc. Math. France}, 143(1):197--228, 2015.

\bibitem{MR1901990}
A.~Umegaki.
\newblock Determination of all {$\bold Q$}-rational {CM}-points in moduli
  spaces of polarized abelian surfaces.
\newblock In {\em Analytic number theory ({B}eijing/{K}yoto, 1999)}, volume~6
  of {\em Dev. Math.}, pages 349--357. Kluwer Acad. Publ., Dordrecht, 2002.

\end{thebibliography}

\end{document}